\title{Effective Finite-Valued Approximations of General Propositional Logics}
\author{Matthias Baaz\inst{1} 
\and Richard Zach\inst{2}\thanks{Research supported by the Natural Sciences and Engineering Research Council of Canada.}}
\institute{Technische Universit\"at Wien,  Institut f\"ur Diskrete
Mathematik und\\ Geometrie E104, A--1040 Vienna,
Austria. \email{baaz@logic.at} \and University of Calgary, Department
of Philosophy,\\ Calgary, Alberta T2N 1N4,
Canada. \email{rzach@ucalgary.ca}}
\spnewtheorem{defn}[theorem]{Definition}{\bfseries}{\upshape}
\let\impl\supset
\let\Box\square
\def\I{\ensuremath{v}}
\def\J{\ensuremath{w}}
\def\tbox{\widetilde{\vphantom{l}\Box}}
\def\tdiamond{\widetilde{\vphantom{l}\Diamond}}
\def\MC{{\rm MC}}
\def\depth{\mathrm{dp}}
\def\CL{{\bf CL}}
\def\LL{{\bf LL}}
\def\IPC{{\bf IPC}}
\def\A{{\bf A}}
\def\IPL{{\bf IPL}}
\def\Taut{{\rm Taut}}
\def\Thm{{\rm Thm}}
\def\Frm{{\rm Frm}}
\def\Var{\mathord{\rm Var}}
\def\M{{\bf M}}
\def\AL{{\bf L}}
\def\AK{{\bf C}}
\def\SF{{\bf S4}}
\def\G{{\bf G}}
\def\Gm{{\ensuremath{{\bf G}_m}}}
\def\LA{\ensuremath{{\cal L}}}
\def\K{\ensuremath{{\cal K}}}
\def\MVL{{\rm MVL}}
\def\better{\mathrel{\lhd}}
\def\bettereq{\mathrel{\unlhd}}
\def\cbi{\mathop{\tilde{\leftrightarrow}}}
\def\Nex{{\scriptstyle \bigcirc}}
\begin{document}
\bibliographystyle{splncs}
\maketitle
\pagestyle{plain}

\begin{shortdedication}
Dedicated to Professor Trakhtenbrot on the occasion of his 85th birthday.
\end{shortdedication}

\begin{abstract}
Propositional logics in general, considered as a set of sentences, can
be undecidable even if they have ``nice'' representations, e.g., are
given by a calculus.  Even decidable propositional logics can be
computationally complex (e.g., already intuitionistic logic is
PSPACE-complete).  On the other hand, finite-valued logics are
computationally relatively simple---at worst NP.  Moreover,
finite-valued semantics are simple, and general methods for theorem
proving exist.  This raises the question to what extent and under what
circumstances propositional logics represented in various ways can be
approximated by finite-valued logics.  It is shown that the minimal
$m$-valued logic for which a given calculus is strongly sound can be
calculated.  It is also investigated under which conditions
propositional logics can be characterized as the intersection of
(effectively given) sequences of finite-valued logics.
\end{abstract}

\section{Introduction}

The question of what to do when faced with a new logical
calculus is an old problem of mathematical logic.  Often, at
least at first, no semantics are available.  For example, intuitionistic
propositional logic was constructed by Heyting only as a calculus;
semantics for it were proposed much later.  Linear logic was in a
similar situation in the early 1990s.  The lack of semantical methods
makes it difficult to answer questions such as: Are statements of a
certain form (un)derivable? Are the axioms independent? Is the
calculus consistent?  For logics closed under substitution, many-valued
methods have often proved valuable since they were first used for
proving underivabilities by Bernays \cite{Bernays:26} in 1926 (and
later by others, e.g., McKinsey and Wajsberg; see also
\cite[\S~25]{Rescher:69}).  The method is very simple.  Suppose
you find a many-valued logic in which the axioms of a given calculus
are tautologies, the rules are sound, but the formula in question is not
a tautology: then the formula cannot be derivable.

\begin{example}
Intuitionistic propositional logic is axiomatized by the following
calculus~\IPC:
\begin{enumerate}
\item Axioms:
\[
\begin{array}{ll}
a_1 & A \impl A \land A \\
a_2 & A \land B \impl B \land A \\
a_3 & (A \impl B) \impl (A \land C \impl B \land C) \\
a_4 & (A \impl B) \land (B \impl C) \impl (A \impl C) \\
a_5 & B \impl (A \impl B) \\
a_6 & A \land (A \impl B) \impl B \\
a_7 & A \impl A \lor B \\
a_8 & A \lor B \impl B \lor A \\
a_9 & (A \impl C) \land (B \impl C) \impl (A \lor B \impl C) \\
a_{10} & \neg A \impl (A \impl B) \\
a_{11} & (A \impl B) \land (A \impl \neg B) \impl \neg A \\
a_{12} & A \impl (B \impl A \land B)
\end{array}
\]
\item Rules (in usual notation):
\[
\infer[$MP$]{B}{A & A \impl B}
\]
\end{enumerate}
Now consider the two-valued logic with classical truth tables, except that
$\lnot$ maps both truth values to ``true''.  Then every axiom except $a_{10}$
is a tautology and modus ponens preserves truth. Hence $a_{10}$ is independent 
of the other axioms.
\end{example}

To use this method to answer underivability question in general it is
necessary to find many-valued matrices for which the given calculus is
sound.  It is also necessary, of course, that the matrix has as few
tautologies as possible in order to be useful.  We are interested
in how far this method can be automatized.

Such ``optimal'' approximations of a given calculus may also have
applications in computer science. In the field of artificial
intelligence many new (propositional) logics have been introduced.
They are usually better suited to model the problems dealt with in AI
than traditional (classical, intuitionistic, or modal) logics, but
many have two significant drawbacks: First, they are either given
solely semantically or solely by a calculus.  For practical purposes,
a proof theory is necessary; otherwise computer representation of and
automated search for proofs/truths in these logics is not feasible.
Although satisfiability in many-valued propositional logics is (as in
classical logic) NP-complete \cite{Mundici:87}, this is still
(probably) much better than many other important logics.

On the other hand, it is evident from the work of
Carnielli~\cite{Carnielli:87} and H\"ahnle~\cite{Hahnle:93} on
tableaux, and Rousseau, Takahashi, and Baaz et
al.~\cite{BFSZ:98} on sequents, that finite-valued logics are,
from the perspective of proof {\em and} model theory, very close to
classical logic.  Therefore, many-valued logic is a very suitable
candidate if one looks for approximations, in some sense, of given
complex logics.

What is needed are methods for obtaining finite-valued approximations
of the propositional logics at hand.  It turns out, however, that a
shift of emphasis is in order here.  While it is the {\em logic} we
are actually interested in, we always are given only a {\em
representation} of the logic.  Hence, we have to concentrate on
approximations of the representation, and not of the logic per se.

What is a representation of a logic?  The first type of representation
that comes to mind is a calculus.  Hilbert-type calculi are the
simplest conceptually and the oldest historically.  We will
investigate the relationship between such calculi on the one hand and
many-valued logics or effectively enumerated sequences of many-valued
logics on the other hand.  The latter notion has received considerable
attention in the literature in the form of the following two problems:
Given a calculus~\AK,
\begin{enumerate}
\item find a minimal (finite) matrix for which \AK{} is sound
(relevant for non-derivabil\-ity and independence proofs), and
\item find a sequence of finite-valued logics, preferably effectively enumerable,
whose intersection equals the theorems of~\AK, and its converse,
given a sequence of finite-valued logics, find a calculus
for its intersection (exemplified by Ja{\'s}kowski's
sequence for intuitionistic propositional calculus, and by
Dummett's extension axiomatizing the intersection of the sequence
of G\"odel logics, respectively).
\end{enumerate}
For (1), of course, the best case would be a finite-valued logic~\M{}
whose tautologies {\em coincide} with the theorems of~\AK.  \AK{} then
provides an axiomatization of~\M.  This of course is not always
possible, at least for {\em finite}-valued logics.  Lindenbaum
\cite[Satz~3]{LukasiewiczTarski:30} has shown that any logic (in our
sense, a set of formulas closed under substitution) can be
characterized by an {\em infinite}-valued logic.  For a discussion of
related questions see also Rescher~\cite[\S~24]{Rescher:69}.

In the following we study these questions in a general setting.
Consider a propositional Hilbert-type calculus~\AK.  It is (weakly)
sound for a given $m$-valued logic if all its theorems are
tautologies.  Unfortunately, it turns out that it is undecidable if a
calculus is sound for a given $m$-valued logic.  However, for natural
stronger soundness conditions this question is decidable; a
finite-valued logic for which \AK{} satisfies such soundness
conditions is called a \emph{cover} for~\AK.  The optimal (i.e., minimal
under set inclusion of the tautologies) $m$-valued cover for~\AK{} can
be computed.  The next question is, can we find an approximating
sequence of $m$-valued logics in the sense of~(2)?  It is shown that
this is impossible for undecidable calculi~\AK, and possible for all
decidable logics closed under substitution.  This leads us to the
investigation of the {\em many-valued closure} $\MC(\AK)$ of~\AK,
i.e., the set of formulas which are true in all covers of~\AK. In
other words, if some formula can be shown to be underivable in~\AK{}
by a Bernays-style many-valued argument, it is not in the many-valued
closure.  Using this concept we can classify calculi according to
their many-valued behaviour, or according to how good they can be dealt
with by many-valued methods.  In the best case $\MC(\AK)$ equals the
theorems of~\AK{} (This can be the case only if \AK{} is decidable).
We give a sufficient condition for this being the case.
Otherwise $\MC(\AK)$ is a proper superset of the theorems of~\AK.

Axiomatizations \AK~and~$\AK'$ of the same logic may have different
many-valued closures $\MC(\AK)$ and $\MC(\AK')$ while being
model-theoretically indistinguishable. Hence, the many-valued closure
can be used to distinguish between \AK~and~$\AK'$ with regard to their
proof-theoretic properties.

Finally, we investigate some of these questions for other
representations of logics, namely for decision procedures and
(effectively enumerated) finite Kripke models.  In these cases
approximating sequences of many-valued logics whose intersection
equals the given logics can always be given.

Some of our results were previously reported in \cite{BaazZach:94}, of
which this paper is a substantially revised and expanded version.

\section{Propositional Logics}

\begin{defn}
A {\em propositional language~$\cal L$} consists of the following:
\begin{enumerate}
\item propositional variables:  $X_1$, $X_2$, $X_3$, \dots
\item propositional connectives of arity $n_j$:
$\Box_1^{n_1}$,~$\Box_2^{n_2}$, \dots,~$\Box_r^{n_r}$. If $n_j = 0$,
then $\Box_j$ is called a {\em propositional constant}.
\item Auxiliary symbols: $($, $)$, and $,$ (comma).
\end{enumerate}
\end{defn}

Formulas and subformulas are defined as usual.  We denote the set of
formulas over a language $\cal L$ by $\Frm(\LA)$.  By $\Var(A)$
we mean the set of propositional variables occurring in~$A$.  A
\emph{substitution}~$\sigma$ is a mapping of variables to formulas,
and if $F$ is a formula, $F\sigma$ is the result of simultaneously
replacing each variable $X$ in $F$ by $\sigma(X)$.

\begin{defn}
The \emph{depth} $\depth(A)$ of a formula~$A$ is defined as follows:
$\depth(A) = 0$ if $A$ is a variable or a 0-place connective
(constant).  If
$A = \Box(A_1, \ldots, A_n)$, then let
$\depth(A) = \max\{\depth(A_1), \ldots, \depth(A_n)\} + 1$.
 \end{defn}

\begin{defn}\label{defn:proplogic}
A {\em propositional Hilbert-type calculus}~\AK{} in the language $\LA$ is given by
\begin{enumerate}
\item a finite set $A(\AK) \subseteq \Frm(\LA)$ of axioms.
\item a finite set $R(\AK)$ of rules of the form
\[
\infer[r]{C}{A_1 & \ldots & A_n}
\]
where $C$, $A_1$, \dots, $A_n \in \Frm(\LA)$
\end{enumerate}
A formula~$F$ is a {\em theorem} of \AL{} if there is a derivation of $F$
in \AK, i.e., a finite sequence \[ F_1, F_2, \ldots, F_s = F \]
of formulas s.t.{} for each $F_i$ there is a substitution $\sigma$ so that either
\begin{enumerate}
\item $F_i = A\sigma$ where $A$ is an axiom in $A(\AK)$, or
\item there are $F_{k_1}$, \dots,~$F_{k_n}$ with $k_j < i$
and a rule $r \in R(\AK)$ with premises $A_1$, \dots, $A_n$ and conclusion $C$, s.t.{} $F_{k_j} = A_j\sigma$
and $F_i = C\sigma$
\end{enumerate}
If $F$ is a theorem of~\AK{} we write $\AK \vdash F$.
The set of theorems of~\AK{} is denoted by~$\Thm(\AK)$.
\end{defn}

\begin{remark}
The above notion of a propositional rule is the one usually
used in axiomatizations of propositional logic.  It is, however,
by no means the only possible notion.  For instance, Sch\"utte's
rules
\[
\infer{A(X)}{A(\top) & A(\bot)} \qquad
\infer{A(C) \leftrightarrow A(D)}{C \leftrightarrow D}
\]
where $X$ is a propositional variable, and $A$, $C$, and $D$ are
formulas, does not fit under the above definition.  And not only do
they not fit this definition, the proof-theoretic behaviour of such
rules is indeed significantly different from other ``ordinary'' rules.
For instance, the rule on the left allows the derivation of all
tautologies with $n$ variables in number of steps linear in~$n$; with
a Hilbert-type calculus falling under the definition, this is not
possible~\cite{BaazZach:94a}.
\end{remark}

\begin{remark}\label{rem:sequent}
Many logics are more naturally axiomatized using sequent calculi, in
which structure (sequences of formulas, sequent arrows) are used in
addition to formulas.  Many sequent calculi can easily be encoded in
Hilbert-type calculi in an extended language, or even
straightforwardly translated into Hilbert calculi in the same
language, using constructions sketched below:
\begin{enumerate}
\item Sequences of formulas can be coded using a binary
operator~$\cdot$.  The sequent arrow can simply be coded as a binary
operator~$\to$.  For empty sequences, a constant $\Lambda$ is used.
We have the following rules, to assure associativity
of~$\cdot$:
\[
\infer{X \cdot \Bigl(\bigl((U\cdot V)\cdot W\bigr)\cdot Y\Bigr) \to Z}
      {X \cdot \Bigl(\bigl(U\cdot (V\cdot W)\bigr)\cdot Y\Bigr) \to Z}
\qquad
\infer{\Bigl(X \cdot \bigl((U\cdot V)\cdot W\bigr)\Bigr)\cdot Y \to Z}
      {\Bigl(X \cdot \bigl(U\cdot (V\cdot W)\bigr)\Bigr)\cdot Y \to Z}
\]
as well as the respective rules without $X$, without $Y$, without
both $X$~and~$Y$, with the rules upside-down, and also for the
right side of the sequent (20~rules total).
\item The usual sequent rules can be coded using the above
constructions, e.g., the $\land$-Right rule of \textbf{LJ} would become:
\[
\infer{U \to V \cdot (X \land Y)}{U \to V \cdot X & U \to V \cdot Y} 
\]
\item If the language of the logic in question contains constants and
connectives which ``behave like'' the $\Lambda$ and $\cdot$ on the
left or right of a sequent, and a conditional which behaves like the
sequent arrow, then no additional connectives are necessary. For
instance, instead of $\cdot$, $\Lambda$ on the left, use $\land$,
$\top$; on the right, use $\lor$, $\bot$, and use $\impl$ instead of
$\to$. Addition of the rule
\[
\infer{X}{\top \impl X}
\]
would then result in a calculus which proves exactly the formulas~$F$
for which the sequent ${} \to F$ is provable in the original sequent
calculus.
\item Some sequent rules require restrictions on the
form of the side formulas in a rule, e.g., the
$\Box$-right rule in modal logics:
\[
\infer{\Box\Pi \to \Box A}{\Box\Pi \to A}
\]
It is not immediately possible to accommodate such a rule in the
translation. However, in some cases it can be replaced with another
rule which can.  E.g., in \SF, it can be replaced by
\[
\infer{\Box\Pi \to \Box A}{\Pi \to A}
\]
which can in turn be accommodated using rules such as
\[
\infer{\Box X \impl \Box Y}{X \impl Y} \quad 
\infer{U \land (\Box X \land \Box Y) \impl V}{U \land \Box(X \land Y) \impl V}
\quad 
\infer{U \land \Box Y \impl V}{U \land \Box\Box Y \impl V}
\]
(in the version with standard connectives serving as $\cdot$ and sequent arrow).
\end{enumerate}
\end{remark}

\begin{defn}\label{defn:analytic}
A propositional Hilbert-type calculus is called {\em strictly
analytic} iff for every rule
\[
\infer[r]{C}{A_1 \ldots A_n}
\]
it holds that $\Var(A_i) \subseteq \Var(C)$ and $\depth(A_i\sigma) \le \depth(C\sigma)$ 
for every substitution~$\sigma$.
\end{defn}

This notion of strict analyticity is orthogonal to the one employed in
the context of sequent calculi, where ``analytic'' is usually taken to
mean that the rules have the subformula property (the formulas in the
premises are subformulas of those in the conclusion).  A strictly
analytic calculus in our sense need not satisfy this.  On the other
hand, Hilbert calculi resulting from sequent calculi using the coding
above need not be strictly analytic in our sense, even if the sequent
calculus has the subformula property.  For instance, the contraction
rule does not satisfy the condition on the depth of substitution
instances of the premises and conclusion.  The standard notion of
analyticity does not entail decidability, since for instance cut-free
propositional linear logic~\LL{} is analytic but \LL{} is
undecidable~\cite{LMSS:90}.  Our notion of strict analyticity does
entail decidability, since the depth of the conclusion of a rule in a
proof is always greater or equal to the depth of the premises, and so
the number of formulas that can appear in a proof of a given formula
is finite.

\begin{defn}
A {\em propositional logic}~\AL{} in the language~$\cal L$ is a
subset of~$\Frm(\LA)$ closed under substitution.
\end{defn}

Every propositional calculus~\AK{} defines a propositional logic,
namely $\Thm(\AK)$, since
$\Thm(\AK)$ is closed under substitution.
Not every propositional logic, however, is axiomatizable, let
alone finitely axiomatizable by a Hilbert calculus.
For instance, the logic
\begin{eqnarray*}
\{\Box^k(\top) & \mid & \hbox{$k$ is the G\"odel number of a} \\
& & \hbox{true sentence of arithmetic} \}
\end{eqnarray*}
is not axiomatizable, whereas the logic
\[
\{\Box^k(\top) \mid \hbox{$k$ is prime}\}
\]
is certainly axiomatizable (it is even decidable), but not by
a Hilbert calculus using only $\Box$ and $\top$. (It is easily
seen that any Hilbert calculus for $\Box$ and $\top$ has
either only a finite number of theorems or yields arithmetic
progressions of~$\Box$'s.)

\begin{defn}
A {\em propositional finite-val\-ued logic \M} is given by a finite
set of truth values $V(\M)$, the set of {\em designated truth values}
$V^+(\M) \subseteq V(\M)$, and a set of truth functions $\tbox_j
\colon V(\M)^{n_j} \to V(\M)$ for all connectives $\Box_j \in \LA$
with arity~$n_j$.
\end{defn}

\begin{defn}
A {\em valuation} \I{} is a mapping from the set of propositional
variables into~$V(\M)$.  A valuation \I{} can be extended in the standard
way to a function from formulas to truth values.  \I{} {\em
satisfies} a formula $F$, in symbols:  $\I \models_\M F$, if $\I(F)
\in V^+(\M)$.  In that case, \I{} is called a {\em model} of~$F$,
otherwise a {\em countermodel}.  A formula $F$ is a {\em tautology} of
\M{} iff it is satisfied by every valuation. Then we write $\M \models F$.
We denote the set of tautologies of \M{} by $\Taut(\M)$.
\end{defn}

\begin{example}
The sequence of $m$-valued G\"odel logics~\Gm{} is given by $V(\Gm) =
\{0, 1, \ldots, m-1\}$, the designated values $V^+(\Gm) = \{0\}$, and
the following truth functions:
\begin{eqnarray*}
\widetilde{\neg}_\Gm(v) & = & \begin{cases} 0 & \text{for $v = m-1$} \\ m-1 & \text{for $v \neq m-1$} \end{cases}\\
\widetilde{\lor}_\Gm(v, w) & = & \min(a, b) \\
\widetilde{\land}_\Gm(v, w) & = & \max(a, b) \\
\widetilde{\impl}_\Gm(v, w) & =& \begin{cases}0 & \text{for $v \ge w$} \\ w & \text{for $v < w$} \end{cases}
\end{eqnarray*}
\end{example}

In the remaining sections, we will concentrate on the relations
between propositional logics~\AL{} represented in some way (e.g., by a
calculus), and finite-valued logics~\M.  The objective is to find
many-valued logics~\M{}, or effectively enumerated sequences thereof,
which, in a sense, approximate the the logic~\AL.

The following well-known product construction is useful for characterizing
the ``intersection'' of many-valued logics.

\begin{defn}
Let $\M$ and $\M'$ be $m$ and $m'$-valued logics, respectively.  Then
$\M \times \M'$ is the $mm'$-valued logic where $V(\M \times \M') =
V(\M) \times V(\M')$, $V^+(\M \times \M') = V^+(\M) \times V^+(\M')$,
and truth functions are defined component-wise.  I.e., if $\Box$ is an
$n$-ary connective, then \[\tbox_{\M \times \M'}(w_1, \ldots, w_n) = 
\langle \tbox_\M(w_1, \ldots, w_n),\tbox_{\M'}(w_1, \ldots, w_n)\rangle.\]
\end{defn}

For convenience, we define the following:  Let \I{} and $\I'$ be
valuations of \M{} and $\M'$, respectively.  $\I \times \I'$ is the
valuation of $\M \times \M'$ defined by:  $(\I \times \I')(X) =
\langle \I(X), \I'(X) \rangle$.  If $\I^\times$ is a valuation of $\M
\times \M'$, then the valuations $\pi_1\I^\times$ and $\pi_2\I^\times$
of $\M$ and $\M'$, respectively, are defined by $\pi_1\I^\times(X) =
v$ and $\pi_2\I^\times(X) = v'$ iff $\I^\times(X) = \langle v,
v'\rangle$.

\begin{lemma}\label{lem:tauttimes}
$\Taut(\M \times \M') = \Taut(\M) \cap \Taut(\M')$
\end{lemma}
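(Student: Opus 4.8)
The plan is to prove the two inclusions separately, using the projections $\pi_1$ and $\pi_2$ in one direction and the product valuation $\I \times \I'$ in the other. First I would establish the key bridging fact by a routine induction on the structure of formulas: for any valuation $\I^\times$ of $\M \times \M'$ and any formula $F$, we have $\I^\times(F) = \langle (\pi_1\I^\times)(F), (\pi_2\I^\times)(F)\rangle$, and dually, for valuations $\I$ of $\M$ and $\I'$ of $\M'$, we have $(\I \times \I')(F) = \langle \I(F), \I'(F)\rangle$. The base case is immediate from the definition of the product valuation and the projections on variables; the induction step follows because the truth functions of $\M \times \M'$ are defined component-wise, so $\tbox_{\M\times\M'}$ applied to a tuple of pairs is the pair of $\tbox_\M$ and $\tbox_{\M'}$ applied to the respective components.

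For the inclusion $\Taut(\M \times \M') \subseteq \Taut(\M) \cap \Taut(\M')$, suppose $F \in \Taut(\M \times \M')$ and let $\I$ be any valuation of $\M$. Pick any valuation $\I'$ of $\M'$ (the value set is nonempty, being finite and the codomain of truth functions), form $\I \times \I'$, and note $(\I \times \I')(F) \in V^+(\M \times \M') = V^+(\M) \times V^+(\M')$. By the bridging fact this pair equals $\langle \I(F), \I'(F)\rangle$, so $\I(F) \in V^+(\M)$; since $\I$ was arbitrary, $F \in \Taut(\M)$. The argument for $\Taut(\M')$ is symmetric.

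For the converse inclusion, suppose $F \in \Taut(\M) \cap \Taut(\M')$ and let $\I^\times$ be an arbitrary valuation of $\M \times \M'$. Then $(\pi_1\I^\times)(F) \in V^+(\M)$ and $(\pi_2\I^\times)(F) \in V^+(\M')$ by hypothesis, so by the bridging fact $\I^\times(F) = \langle (\pi_1\I^\times)(F), (\pi_2\I^\times)(F)\rangle \in V^+(\M) \times V^+(\M') = V^+(\M \times \M')$. Hence $F \in \Taut(\M \times \M')$.

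There is no real obstacle here; the only point requiring minor care is the induction establishing the component-wise behaviour of valuations on arbitrary formulas, and the small remark that the value sets are nonempty so that the auxiliary valuations needed in the first inclusion actually exist. Everything else is bookkeeping with the definitions of $\M \times \M'$, $\I \times \I'$, and $\pi_i\I^\times$.
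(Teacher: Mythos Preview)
Your proof is correct and follows essentially the same approach as the paper's: both directions use the product valuation $\I \times \I'$ and the projections $\pi_1\I^\times$, $\pi_2\I^\times$ in exactly the way you describe. You are simply more explicit than the paper, which compresses your ``bridging fact'' induction into the phrase ``by the definition of~$\times$''.
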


\begin{proof}
Let $A$ be a tautology of $\M \times \M'$ and $\I$ and $\I'$ be
valuations of $\M$ and $\M'$, respectively.  Since $\I \times \I'
\models_{\M \times \M'} A$, we have $\I \models_\M A$ and $\I'
\models_{\M'} A$ by the definition of~$\times$.  Conversely, let $A$
be a tautology of both \M{} and $\M'$, and let $\I^\times$ be a
valuation of $\M \times \M'$.  Since $\pi_1\I^\times \models_\M A$ and
$\pi_2\I^\times \models_{\M'} A$, it follows that $\I^\times
\models_{\M \times \M'} A$.
\qed\end{proof}

The definition and lemma are easily generalized to the
case of finite products $\prod_i \M_i$ by induction.

The construction of Lindenbaum \cite[Satz~3]{LukasiewiczTarski:30}
shows that every propositional logic can be characterized as the set
of tautologies of an infinite-valued logic.  $\M(\AL)$ is defined as
follows: the set of truth values $V(\M(\AL)) = \Frm(\LA)$, and the set
of designated values $V^+(\M(\AL)) = \AL$.  The truth functions are
given by
\[
\tbox(F_1, \ldots, F_n) = \Box(F_1, \ldots, F_n)
\]
Since we are interested in finite-valued logics, the following
constructions will be useful.

\begin{defn}\label{defn:Mij}

Let $\Frm_{i,j}(\LA)$ be the set of formulas of depth $\le i$
containing only the variables $X_1$, \ldots, $X_j$.  The finite-valued
logic $\M_{i,j}(\AL)$ is defined as follows: The set of truth values
of $\M_{i,j}(\AL)$ is $V = \Frm_{i,j}(\LA) \cup \{\top\}$; the
designated values $V^+ = (\Frm_{i,j}(\LA) \cap \AL) \cup \{\top\}$.
The truth tables for $\M_{i,j}(\AL)$ are given by:
\[
\begin{array}{@{}l@{}}
\tbox(v_1, \ldots, v_n) = \\
\quad = \begin{cases} \Box(F_1, \ldots, F_n) & \text{if $v_j = F_j$ for $1 \le j \le n$} \\
& \text{and $\Box(F_1, \ldots, F_n) \in \Frm_{i,j}(\LA)$} \\
\top & \text{otherwise}
\end{cases}
\end{array}
\]
\end{defn}

\begin{proposition}\label{prop:Mij}
Let $\I$ be a valuation in $\M_{i,j}(\AL)$. If $\I(X) \notin
\Frm_{i,j}(\LA)$ for some $X \in \Var(A)$, then $\I(A) =
\top$. Otherwise, $\I$ can also be seen as a substitution $\sigma_\I$
assigning the formula $\I(X) \in \Frm_{i,j}(\LA)$ to the variable $X$.
Then $\I(A) = A$ if $\depth(A\sigma_\I) \le i$ and $= \top$ otherwise.

If $A \in \Frm_{i,j}(\LA)$, then $A \in \Taut(\M_{i,j}(\AL))$ iff $A
\in \AL$; otherwise $A \in \Taut(\M_{i,j}(\AL))$.  In particular, $\AL
\subseteq \Taut(\M_{i,j}(\AL))$.
\end{proposition}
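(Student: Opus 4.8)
The plan is to prove the three claims of Proposition~\ref{prop:Mij} in sequence, with the bulk of the work being a straightforward induction on the structure of~$A$ for the first paragraph; the statements about tautologies then follow almost immediately.

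First I would treat the case $\I(X) \notin \Frm_{i,j}(\LA)$ for some $X \in \Var(A)$. I would argue by induction on~$A$ that $\I(A) = \top$: the variable~$X$ itself evaluates to a value outside $\Frm_{i,j}(\LA)$, and one checks that $\top$ propagates — more precisely, I would actually prove the slightly stronger statement that if any immediate subformula evaluates to a value not in $\Frm_{i,j}(\LA)$ (in particular to~$\top$), then by the ``otherwise'' clause of the truth table $\tbox(v_1,\dots,v_n) = \top$. This is immediate from Definition~\ref{defn:Mij}, since the first case of the truth table requires all the $v_k$ to be formulas in $\Frm_{i,j}(\LA)$.

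Next, assuming $\I(X) \in \Frm_{i,j}(\LA)$ for all $X \in \Var(A)$, I would identify~$\I$ with the substitution $\sigma_\I$ and prove by induction on~$A$ that $\I(A) = A\sigma_\I$ whenever $\depth(A\sigma_\I) \le i$, and $\I(A) = \top$ otherwise. For the base case $A = X$ a variable, $\I(X) = \sigma_\I(X)$ which lies in $\Frm_{i,j}(\LA)$, so its depth is $\le i$ and the claim holds; for a constant the value is the constant itself. For the inductive step $A = \Box(A_1,\dots,A_n)$: if $\depth(A\sigma_\I) \le i$, then each $\depth(A_k\sigma_\I) < \depth(A\sigma_\I) \le i$, so by induction hypothesis $\I(A_k) = A_k\sigma_\I \in \Frm_{i,j}(\LA)$, and then the first clause of the truth table gives $\I(A) = \Box(A_1\sigma_\I,\dots,A_n\sigma_\I) = A\sigma_\I$ (noting $A\sigma_\I \in \Frm_{i,j}(\LA)$ since its depth is $\le i$ and its variables are among $X_1,\dots,X_j$). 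If $\depth(A\sigma_\I) > i$: if some $\depth(A_k\sigma_\I) > i$ then $\I(A_k) = \top \notin \Frm_{i,j}(\LA)$ by IH, so the ``otherwise'' clause gives $\I(A) = \top$; otherwise all $\I(A_k) = A_k\sigma_\I \in \Frm_{i,j}(\LA)$ but $\Box(A_1\sigma_\I,\dots,A_n\sigma_\I) = A\sigma_\I \notin \Frm_{i,j}(\LA)$ (its depth exceeds~$i$), so again the ``otherwise'' clause yields~$\top$.

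Finally, for the tautology claims: if $A \notin \Frm_{i,j}(\LA)$, then for every valuation~$\I$ we have $\I(A) \in \{A\sigma_\I\} \cup \{\top\}$, and in fact $\I(A) = A$ can never occur (as $A \notin \Frm_{i,j}(\LA)$ and the values are either $\top$ or formulas obtained as $A\sigma_\I$ — here one notes $A\sigma_\I$ with depth $\le i$ could in principle equal a different formula, but never~$A$ itself when $A \notin \Frm_{i,j}$); hence $\I(A) = \top$ whenever $\I(A) = A$ would fail, and in all cases $\I(A)$ being either $\top$ or a designated formula is not guaranteed, so I must be more careful: actually the cleanest route is that $A \notin \Frm_{i,j}(\LA)$ means for \emph{all} $\I$, $\I(A) \neq A$, hence by the first part $\I(A) = \top \in V^+$, so $A \in \Taut(\M_{i,j}(\AL))$. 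If $A \in \Frm_{i,j}(\LA)$: taking $\I$ the identity substitution gives $\I(A) = A$, so if $A \notin \AL$ then $A \notin V^+$ and $A$ is not a tautology; conversely if $A \in \AL$, then for any~$\I$, $\I(A)$ is either $\top \in V^+$, or equals $A\sigma_\I$ with $\depth(A\sigma_\I) \le i$ — and since $A \in \AL$ and $\AL$ is closed under substitution, $A\sigma_\I \in \AL \cap \Frm_{i,j}(\LA) \subseteq V^+$ (here the variable condition holds since $\Var(A\sigma_\I) \subseteq \Var(\I(X) : X \in \Var(A)) \subseteq \{X_1,\dots,X_j\}$). Hence $A \in \Taut(\M_{i,j}(\AL))$. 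The inclusion $\AL \subseteq \Taut(\M_{i,j}(\AL))$ is then immediate, splitting on whether $A \in \Frm_{i,j}(\LA)$. I expect the main obstacle to be phrasing the $A \notin \Frm_{i,j}(\LA)$ case of the first paragraph carefully — one needs that a value outside $\Frm_{i,j}(\LA)$ at an argument forces the ``otherwise'' branch — but this is really just unwinding the definition of the truth table, so no serious difficulty is anticipated.
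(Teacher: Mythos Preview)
Your induction on the structure of~$A$ is exactly the approach the paper intends (its proof is the single line ``By induction on the depth of~$A$''), and your treatment of the first paragraph and of the case $A \in \Frm_{i,j}(\LA)$ in the second paragraph is correct.

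There is, however, a genuine gap in your argument for the clause ``otherwise $A \in \Taut(\M_{i,j}(\AL))$,'' i.e., the case $A \notin \Frm_{i,j}(\LA)$. You yourself notice something is off (``so I must be more careful''), but the patch you offer---that $\I(A) \neq A$ implies, ``by the first part,'' $\I(A) = \top$---does not follow: the first part yields $\I(A) = A\sigma_\I$ when $\depth(A\sigma_\I) \le i$, and $A\sigma_\I$ need be neither~$A$ nor designated. Concretely, take $\AL = \emptyset$ (vacuously closed under substitution), $i = j = 1$, and $A = X_2$. Then $A \notin \Frm_{1,1}(\LA)$, yet the valuation $\I(X_2) = X_1$ gives $\I(A) = X_1 \notin V^+ = \{\top\}$, so $A$ is not a tautology. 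The point is that $A$ may fail to lie in $\Frm_{i,j}(\LA)$ merely because it contains a variable $X_k$ with $k > j$, and then nothing forces $\I(A) = \top$. So this clause of the proposition, read literally, cannot be proved. Fortunately this does not touch the essential conclusion $\AL \subseteq \Taut(\M_{i,j}(\AL))$, which you establish correctly via closure of~$\AL$ under substitution, nor any later use of the proposition in the paper (which only invokes the case $A \in \Frm_{i,j}(\LA)$ together with the inclusion $\AL \subseteq \Taut(\M_{i,j}(\AL))$).
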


\begin{proof}
By induction on the depth of $A$.  
\qed\end{proof}

When looking for a logic with as small a number of truth values
as possible which falsifies a given formula we can
use the following construction.

\begin{proposition}\label{prop:red}
Let \M{} be any many-valued logic,
and $A_1$, \dots,~$A_n$ be formulas not valid in~\M.
Then there is a finite-valued logic $\M' = \Phi(\M, A_1, \ldots, A_n)$ s.t.{}
\begin{enumerate}
\item $A_1$, \dots, $A_n$ are not valid in~$\M'$,
\item $\Taut(\M) \subseteq \Taut(\M')$, and
\item $\left| V(\M') \right| \le \xi(A_1, \ldots, A_n)$, where
$\xi(A_1, \ldots, A_n) = \prod_{i=1}^n \xi(A_i)$ and $\xi(A_i)$ is the 
number of subformulas of $A_i\ + 1$. 
\end{enumerate}
\end{proposition}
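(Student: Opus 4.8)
The plan is to build $\M'$ as a quotient of a finite submodel of $\M$. First I would observe that since $A_1,\dots,A_n$ are not valid in $\M$, there are valuations $\I_1,\dots,\I_n$ with $\I_k\not\models_\M A_k$. For each $k$, consider the set $U_k$ of truth values actually assumed under $\I_k$ at subformulas of $A_k$, together with the value $\I_k(A_k)$ itself; this is a finite set of size at most $\xi(A_k)$. The idea is to collapse $V(\M)$ onto a small quotient that still ``remembers'' enough structure to separate, for each $k$, designated from undesignated along the computation of $A_k$ under $\I_k$. Concretely, I would first handle a single formula $A$ with countermodel $\I$, producing $\Phi(\M,A)$ with at most $\xi(A)$ values, and then obtain $\Phi(\M,A_1,\dots,A_n)$ as the product $\prod_{k=1}^n \Phi(\M,A_k)$, using Lemma~\ref{lem:tauttimes} to get $\Taut(\M)\subseteq\Taut(\prod_k\Phi(\M,A_k))$ and noting the cardinality is $\prod_k\xi(A_k)=\xi(A_1,\dots,A_n)$. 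Clause~(1) for the product follows because a product valuation projecting to the $\I_k$-induced valuations still falsifies $A_k$ in the $k$-th component, hence in the product.

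So the heart of the matter is the single-formula case. Fix $A$ and a valuation $\I$ with $\I(A)\notin V^+(\M)$. Let $S$ be the set of subformulas of $A$, and let $W=\{\I(B)\mid B\in S\}\subseteq V(\M)$, so $|W|\le$ (number of subformulas of $A$). I would define an equivalence relation $\sim$ on $V(\M)$ as follows: all elements not in $W$ that are designated are lumped into one class, all elements not in $W$ that are undesignated into another, and each element of $W$ is its own class — except we must check this is a congruence with respect to all the truth functions, which in general it is not. The safe route is instead to NOT quotient arbitrarily but to take $V(\M')$ to be $W$ together with a single fresh value $\top$ (designated iff $\I(A)$ is designated — actually we want $\top$ designated so as not to lose tautologies), giving $|V(\M')|\le \xi(A)$ since $\xi(A)$ counts subformulas plus one. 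The truth function $\tbox_{\M'}$ on arguments $w_1,\dots,w_{n}$ from $W$ is defined to be $\tbox_\M(w_1,\dots,w_{n})$ if that value lies in $W$, and $\top$ otherwise; on any tuple involving $\top$, output $\top$. Designatedness on $W$ is inherited from $\M$, and $\top\in V^+(\M')$.

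With this definition the key claims are: (i) For the specific valuation $\I$ viewed as a valuation $\I'$ of $\M'$ (legitimate since each $\I(X)$ for $X\in\Var(A)$ lies in $W$), we have $\I'(B)=\I(B)$ for every subformula $B$ of $A$, proved by induction on $B$ — in the inductive step $\tbox_\M$ applied to the (equal) values $\I(B_1),\dots,\I(B_n)$ yields $\I(B)$, which is again in $W$, so the $\M'$-clause gives the same value. Hence $\I'(A)=\I(A)\notin V^+(\M)$, and since $\I(A)\neq\top$, also $\I(A)\notin V^+(\M')$, giving clause~(1). (ii) $\Taut(\M)\subseteq\Taut(\M')$: given a tautology $F$ of $\M$ and any $\M'$-valuation $\J$, if $\J$ assigns $\top$ to some variable of $F$ then a subinduction shows $\J(F)=\top$, hence designated; otherwise $\J$ takes values in $W\subseteq V(\M)$ and we can also regard it as an $\M$-valuation, but here is the subtlety — the $\M'$ truth functions differ from $\M$'s precisely when an intermediate $\M$-value escapes $W$. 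So I would argue: either every intermediate value stays in $W$, in which case $\J(F)=\I_{\M}\text{-value}\in V^+(\M)\subseteq V^+(\M')$, or at some point a value escapes and from then on everything is $\top$, which is designated. Either way $\J(F)$ is designated.

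The main obstacle is clause~(ii), the preservation of tautologies, because the ``escape to $\top$'' behaviour means $\M'$ is not a submodel or homomorphic image of $\M$ in the usual sense, so one cannot invoke a standard preservation theorem; the argument must be the explicit case split above, formalized by an induction on subformulas of $F$ showing that under any $\M'$-valuation $\J$, either $\J$ agrees with the corresponding $\M$-valuation on all subformulas of $F$ (when no escape occurs) or $\J(F)=\top$. Once that dichotomy is established the rest is bookkeeping: clause~(3) is immediate from $|W|\le(\text{subformulas of }A)$ and the product cardinality multiplying out, and clause~(1) for the product reduces to the single-formula case by composing with the projection $\pi_k$. I do not expect any difficulty with the constructions being effective, which is not claimed here anyway.
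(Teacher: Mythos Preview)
Your proposal is correct and takes essentially the same approach as the paper: handle a single formula by taking as truth values the $\I$-values of its subformulas together with a fresh designated~$\top$ (with truth functions defaulting to~$\top$ when the computation would leave this set), and then extend to several formulas via the product construction and Lemma~\ref{lem:tauttimes}. The only difference is an inessential variant in how $\tbox_{\M'}$ is defined on tuples from~$W$---you restrict $\tbox_\M$ and default to~$\top$ when the result escapes~$W$, whereas the paper defaults to~$\top$ unless the tuple matches a subformula pattern of~$A$---but both versions support the same case-split argument for clause~(2).
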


\begin{proof}
We first prove the proposition for $n = 1$.
Let \I~be the valuation in~\M{} making
$A_1$~false, and let $B_1$, \dots,~$B_r$ ($\xi(A_1) = r+1$)
be all subformulas of~$A_1$.
Every $B_i$ has a truth value~$t_i$ in~\I.
Let $\M'$ be as follows: $V(\M') = \{t_1, \ldots, t_r, \top\}$,
$V^+(\M') = V^+(\M) \cap V(\M') \cup \{\top\}$.
If $\Box \in \LA$, define $\tbox$ by
\[
\tbox(v_1, \ldots, v_n) = \begin{cases}
t_i & \text{if $B_i \equiv \Box(B_{j_1}, \ldots, B_{j_n})$} \\
    & \text{and $v_1 = t_{j_1}$, \dots, $v_n = t_{j_n}$} \\
\top & \text{otherwise} \end{cases}
\]

(1)~Since $t_r$ was undesignated in $\M$, it is also
undesignated in $\M'$. But $\I$ is also
a truth value assignment in $\M'$,
hence $\M' \not\models A_1$.

(2)~Let $C$ be a tautology of \M, and let \J~be a valuation
in~$\M'$.  If no subformula of $C$ evaluates to $\top$
under~\J, then \J{} is also a valuation in~\M, and
$C$ takes the same truth value in $\M'$ as in \M{} w.r.t.~\J,
which is designated also in $\M'$. Otherwise, $C$ evaluates to $\top$,
which is designated in~$\M'$. So $C$ is a tautology in~$\M'$.

(3)~Obvious.

\noindent For $n > 1$, the proposition follows by taking 
$\Phi(\M, A_1, \ldots, A_n) = \prod_{i =1}^n \Phi(\M, A_i)$
\qed\end{proof}

\section{Many-Valued Covers for Propositional Calculi}

A very natural way of representing logics is via calculi.  In the
context of our study, one important question is under what conditions
it is possible to find, given a calculus~\AK, a finite-valued
logic~\M{} which approximates as well as possible the set of theorems
$\Thm(\AK)$.  In the optimal case, of course, we would like to have
$\Taut(\M) = \Thm(\AK)$.  This is, however,not always possible.  In
fact, it is in general not even possible to decide, given a calculus
\AK{} and a finite-valued logic~\M, if \M{} is sound for \AK.  In some
circumstances, however, general results can be obtained.  We begin
with some definitions.

\begin{defn}
A calculus~\AK{} is \emph{weakly sound} for an $m$-valued logic~\M{} provided
$\Thm(\AK) \subseteq \Taut(\M)$.  
\end{defn}

\begin{defn}
A calculus~\AK{} is \emph{t-sound} for an $m$-valued logic~\M{}
if
\begin{enumerate}
\item[($*$)] All axioms $A \in A(\AK)$ are tautologies of~\M, and
for every rule~$r \in R(\AK)$ and substitution $\sigma$: if 
for every premise $A$ of $r$, $A\sigma$ is a tautology, then
the corresponding instance $C\sigma$ of the conclusion of $r$
is a tautology as well.
\end{enumerate}
\end{defn}

\begin{defn}\label{prop:approxtest}
A calculus \AK{} is \emph{strongly sound} for an $m$-valued logic~\M{} if
\begin{enumerate}
\item[($*$$*$)] All axioms $A \in A(\AK)$ are tautologies of~\M, and
for every rule~$r \in R(\AK)$: if a valuation
satisfies the premises of~$r$, it also satisfies the
conclusion.
\end{enumerate}
\M{} is then called a {\em cover} for \AK.
\end{defn}

We would like to stress the distinction between these three notions of
soundness.  soundness.  The notion of weak soundness is the familiar
property of a calculus to produce only valid formulas (in this case:
tautologies of~\M) as theorems.  This ``plain'' soundness is what we
actually would like to investigate in terms of approximations.  More
precisely, when looking for a finite-valued logic that approximates a
given calculus, we are content if we find a logic for which \AK{} is
weakly sound.  This is unfortunately not possible in general.

\begin{proposition}
It is undecidable if a calculus~\AK{} is weakly sound for 
a given $m$-valued logic~\M.
\end{proposition}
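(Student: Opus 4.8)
The plan is to reduce an undecidable problem—most naturally the halting problem, or the undecidability of first-order validity, or the undecidability of whether a formula is a theorem of some fixed undecidable Hilbert calculus—to the question of weak soundness. The cleanest route uses the fact (mentioned in the introduction) that there are finitely axiomatized propositional calculi $\AK$ whose theorem set $\Thm(\AK)$ is undecidable. Fix such an $\AK$ in a language $\LA$. Given an arbitrary formula $F$, I want to construct (uniformly and effectively) an $m$-valued logic $\M_F$ such that $\AK$ is weakly sound for $\M_F$ if and only if $F \notin \Thm(\AK)$ (or the other way around); since $\Thm(\AK)$ is undecidable, this shows weak soundness is undecidable.

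First I would use the Lindenbaum-style finite-valued logics $\M_{i,j}(\AL)$ from Definition~\ref{defn:Mij} and Proposition~\ref{prop:Mij}, taking $\AL = \Thm(\AK)$. By Proposition~\ref{prop:Mij}, $\Thm(\AK) \subseteq \Taut(\M_{i,j}(\Thm(\AK)))$ always, so $\AK$ is automatically weakly sound for every $\M_{i,j}(\Thm(\AK))$—which is the wrong direction. So instead I would perturb the construction: build a finite-valued logic $\M_F$ that agrees with a suitable $\M_{i,j}(\Thm(\AK))$ except that the truth value corresponding to $F$ itself (when $F \in \Frm_{i,j}(\LA)$, which I can arrange by choosing $i,j$ large enough from $F$) is artificially made designated. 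Then $F$ becomes a tautology of $\M_F$ regardless, but the point is the behavior on other formulas: I want $\Taut(\M_F)$ to contain $\Thm(\AK)$ precisely when $F$ is (or is not) a theorem. The subtlety is that flipping one value's designation status is too blunt; a more delicate approach is to make $\M_F$ have $F \to G$ as the kind of formula whose tautologyhood encodes derivability.

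A more robust alternative, which I expect is the one actually used: encode $F$ into the truth values so that $\Taut(\M_F) \subseteq \Thm(\AK) \cup \{\text{formulas involving } F\}$, and arrange that some specific axiom instance or some theorem of $\AK$ fails in $\M_F$ exactly when $F \notin \Thm(\AK)$. Concretely, one can take a two-valued (or few-valued) "mask" logic whose only non-tautology is built from $F$—via Proposition~\ref{prop:red} applied to a single formula—and form a product with a fixed cover of $\AK$; the product will fail weak soundness iff that one non-tautology is actually a theorem of $\AK$, i.e., iff $F \in \Thm(\AK)$ after the appropriate substitution/coding. Since $\Taut$ of a product is the intersection of the $\Taut$'s (Lemma~\ref{lem:tauttimes}), this gives exact control: $\AK$ is weakly sound for the product iff $\Thm(\AK) \subseteq \Taut(\M_0) \cap \Taut(\M_{\mathrm{mask}})$, and the first inclusion holds by choice of $\M_0$ while the second reduces to "$F$-coded formula $\notin \Thm(\AK)$."

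The main obstacle is getting the two directions to match up cleanly: one must ensure that the only way weak soundness can fail is through the single controlled formula tied to $F$, i.e., that the "background" finite-valued logic genuinely has $\Thm(\AK) \subseteq \Taut$, while the perturbation kills exactly one theorem when and only when $F$ is derivable. This requires that $\AK$ be not merely undecidable but undecidable in a form where membership of a specific effectively-given formula is the undecidable question, and that the coding of $F$ into $\M_F$'s truth tables be substitution-invariant enough that "$F$ is a theorem" transfers to "some theorem of $\AK$ fails in $\M_F$." Handling this interplay—especially verifying that adding the single bad value does not accidentally invalidate other theorems independently of $F$—is the delicate part; everything else (effectivity of the construction, the product lemma, Proposition~\ref{prop:Mij}) is routine.
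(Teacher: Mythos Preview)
Your strategy of fixing an undecidable calculus $\AK$ and varying the logic $\M_F$ is viable in principle, but the specific constructions you sketch do not work. First, $\M_{i,j}(\Thm(\AK))$ is not effectively computable: to build it you must decide, for each of the finitely many formulas in $\Frm_{i,j}(\LA)$, whether it lies in $\Thm(\AK)$, and that is precisely the undecidable question. Second, ``artificially making $F$ designated'' goes in the wrong direction: enlarging the set of designated values can only enlarge $\Taut$, so weak soundness is preserved regardless of whether $F$ is a theorem---you get no separation. Third, Proposition~\ref{prop:red} does not produce a mask whose \emph{only} non-tautology is $F$; it merely guarantees that $F$ stays a non-tautology while the tautologies of some starting matrix are preserved, and you have no suitable starting matrix without already knowing $\Thm(\AK)$. (There \emph{is} a clean fix along your lines: take the $\M_{i,j}$-style matrix with values $\Frm_{i,j}(\LA) \cup \{\top\}$ but declare every value designated \emph{except} $F$. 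Then $A$ fails to be a tautology iff $F$ is a substitution instance of $A$; since $\Thm(\AK)$ is closed under substitution, some theorem fails iff $F$ itself is a theorem. This is effective and needs no product or cover---but it is not what you proposed.)

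The paper takes the opposite tack: it fixes a trivially simple $\M$ and varies the \emph{calculus}. Given $F$, replace its variables by fresh constants $C_i$ to obtain a closed formula $F\sigma$, add a further fresh constant $C$, and let $\AK'$ be $\AK$ together with the single rule $F\sigma / C$. Take $\M$ to interpret $C$ by a non-designated value and every other connective as a constant function with a designated value; then every formula other than a bare variable or $C$ itself is a tautology. Hence $\M$ is weakly sound for $\AK'$ iff $\AK' \nvdash C$ iff $\AK \nvdash F$. Passing to a closed instance and pushing the derivability question into a new rule sidesteps entirely the difficulty you flag about controlling which theorems land outside $\Taut(\M_F)$.
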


\begin{proof}
Let \AK{} be an undecidable propositional calculus, let $F$ be a
formula, and let $C$ and, for each $X_i \in \Var(F)$, $C_i$ be new
propositional constants ($0$-ary connective) not occurring in~\AK. Let
$\sigma\colon X_i \mapsto C_i$ be a substitution.  Clearly, $\AK
\vdash F$ iff $\AK \vdash F\sigma$.  Now let $\AK'$ be~\AK{} with the
additional rule
\[
\infer{C}{F\sigma}
\]
and let $\M$ be an $m$-valued logic which assigns a non-designated
value to $C$ and otherwise interprets every connective as a constant
function with a designated value.  Then every formula except a
variable of the original language is a tautology, and $C$ is not.
\M{} is then weakly sound for \AK{} over the original language, but
weakly sound for $\AK'$ iff $C$ is not derivable.  Moreover, $\AK'
\vdash C$ iff $\AK' \vdash F\sigma$, i.e., iff $\AK \vdash F$.  If it
were decidable whether \M{} is weakly sound for $\AK'$ it would then
also be decidable if $\AK \vdash F$, contrary to the assumption
that~\AK{} is undecidable.
\qed\end{proof}

On the other hand, it {\em is} obviously decidable if \AK{} is strongly sound
for a given matrix~\M.  

\begin{proposition}\label{cor:dec}
It is decidable if a given propositional calculus is
strongly sound for a given
$m$-valued logic.
\end{proposition}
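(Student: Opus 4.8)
The plan is to exploit the fact that, unlike weak soundness, condition~$(**)$ is a purely \emph{local} property of the calculus: it refers only to the finitely many axioms in $A(\AK)$ and the finitely many rules in $R(\AK)$, and each clause it imposes quantifies only over valuations of the finitely many variables occurring in the formulas of an axiom or a rule. Since $\M$ has only finitely many truth values, there are only finitely many such valuations, so every clause can be checked by exhaustive search, and the conjunction of all these checks is a decision procedure.

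Concretely, first I would dispatch the axioms. For each $A \in A(\AK)$---a finite list---I would enumerate the $m^{|\Var(A)|}$ assignments of values in $V(\M)$ to the variables of $\Var(A)$, extend each to a valuation $\I$ of $A$ in the standard way, and test whether $\I(A) \in V^+(\M)$. The formula $A$ is a tautology of $\M$ iff all these tests succeed; if some $A$ fails, output ``not strongly sound'' and stop. Next I would dispatch the rules. For each rule $r \in R(\AK)$ with premises $A_1, \dots, A_n$ and conclusion $C$---again a finite list---let $W = \Var(C) \cup \bigcup_i \Var(A_i)$, enumerate the $m^{|W|}$ assignments of values to the variables of $W$, and for each resulting valuation $\I$ compute $\I(A_1), \dots, \I(A_n), \I(C)$ and check that $\I(C) \in V^+(\M)$ whenever $\I(A_i) \in V^+(\M)$ for all $i$. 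If some valuation violates this implication, output ``not strongly sound''; otherwise the rule $r$ satisfies its clause.

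By Definition~\ref{prop:approxtest}, $\M$ is a cover for $\AK$ precisely when all the tests above succeed, so this yields the decision procedure; it terminates because $A(\AK)$, $R(\AK)$, and each enumerated set of valuations are finite.

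There is no genuinely hard step here; the only point worth stressing is \emph{why} the argument works for strong soundness but not for weak soundness---strong soundness is verifiable by inspecting the axioms and rules in isolation, whereas weak soundness depends on the possibly undecidable set $\Thm(\AK)$, as the preceding proposition shows. It is also worth remarking, via the standard substitution lemma $\I(A\sigma) = \I'(A)$ with $\I'(X) = \I(\sigma(X))$, that the valuation-level condition on rules automatically lifts to all substitution instances, which is what makes $(**)$ actually guarantee preservation of truth along derivations; but this observation is not needed for the decidability claim itself.
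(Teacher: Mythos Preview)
Your proposal is correct and takes essentially the same approach as the paper: the paper's proof consists of the single sentence ``($*$$*$) can be tested by the usual truth-table method,'' and your argument is simply a careful unpacking of what that means. The additional remarks you make---on why the argument works for $(**)$ but not for weak soundness, and on the substitution lemma---are accurate but, as you note, not needed for the decidability claim itself.
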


\begin{proof}
($*$$*$) can be tested by the usual truth-table method.
\qed\end{proof}

It is also decidable if~\AK{} is t-sound for a matrix~\M, although
this is less obvious:

\begin{proposition}
It is decidable if a given propositional calculus
is t-sound for a given $m$-valued logic~\M.
\end{proposition}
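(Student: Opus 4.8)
The plan is to reduce $t$-soundness to a finite set of strong-soundness-style checks, but over suitably enriched matrices that encode the quantification over substitutions making premises tautologous. The difficulty is that $t$-soundness quantifies over \emph{all} substitutions $\sigma$ and asks whether $A\sigma$ being a tautology (for each premise $A$) forces $C\sigma$ to be a tautology; unlike strong soundness, this is not a local truth-table condition. The key observation is that whether $A\sigma$ is a tautology of $\M$ depends only on the \emph{induced truth functions} that the formulas $\sigma(X)$, $X \in \Var(A)$, define on $\M$, not on the particular formulas themselves. Since $\M$ is finite with $m$ values and $\Var(A)$ is finite, there are only finitely many such induced functions, so there are only finitely many ``behaviours'' a substitution can exhibit on the variables of a given rule.

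First I would make this precise. Fix a rule $r$ with premises $A_1, \ldots, A_n$, conclusion $C$, and let $Y_1, \ldots, Y_k$ be the variables occurring in $r$. For a substitution $\sigma$, each $\sigma(Y_i)$ induces, via the valuation-extension of $\M$, a function $f_i \colon V(\M)^{N} \to V(\M)$ where $N$ bounds the number of variables in the $\sigma(Y_i)$ — but crucially, for the purpose of evaluating $A_j\sigma$ and $C\sigma$ we only care about the joint behaviour of $(f_1, \ldots, f_k)$ as the underlying variables range over all of $V(\M)$. The honest way to say this: consider valuations $w$ of the underlying variables; then $(w(\sigma(Y_1)), \ldots, w(\sigma(Y_k)))$ is some tuple in $V(\M)^k$, and $A_j\sigma$ is a tautology iff for \emph{every} valuation $w$ the resulting tuple lies in the ``designated region'' — more carefully, iff the set $S_\sigma = \{(w(\sigma(Y_1)), \ldots, w(\sigma(Y_k))) : w \text{ a valuation of } \M\} \subseteq V(\M)^k$ is such that $\widetilde{A_j}$ maps every tuple in $S_\sigma$ to a designated value, where $\widetilde{A_j}$ is the $k$-ary truth function on $\M$ induced by the formula $A_j$ (reading its variables as $Y_1, \ldots, Y_k$). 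Likewise $C\sigma$ is a tautology iff $\widetilde{C}$ maps every tuple in $S_\sigma$ to a designated value.

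Next I would argue that the only relevant datum about $\sigma$ is the subset $S_\sigma \subseteq V(\M)^k$, and that every nonempty subset $S$ of $V(\M)^k$ that is ``realizable'' arises as some $S_\sigma$ — in fact it suffices to consider \emph{all} nonempty subsets $S$, since for any such $S$ one can exhibit a substitution using fresh variables $Z_t$ indexed by $t \in S$ and setting $\sigma(Y_i)$ to be a formula that, under the valuation sending $Z_t$ to $t_i$, produces exactly the tuples in $S$ (e.g., using enough fresh variables one realizes any $S$; and adding extra tuples only makes the tautology conditions harder, so restricting to realizable $S$ is conservative — but in any case, for the decision procedure it is safe and sound to quantify over all nonempty $S \subseteq V(\M)^k$, because if the condition ($*$) holds for all such $S$ it certainly holds for the realizable ones, and conversely any failure of ($*$) is witnessed by some realizable $S_\sigma$, hence by some $S$). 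Therefore ($*$) for rule $r$ is equivalent to the finitary statement: for every nonempty $S \subseteq V(\M)^k$, if $\widetilde{A_j}[S] \subseteq V^+(\M)$ for all $j = 1, \ldots, n$, then $\widetilde{C}[S] \subseteq V^+(\M)$. The truth functions $\widetilde{A_j}$ and $\widetilde{C}$ are computable from $\M$ and the rule by straightforward table evaluation, there are finitely many subsets $S$, and each implication is decidable; together with Proposition~\ref{cor:dec}'s truth-table test for the axioms, this gives a decision procedure.

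The main obstacle is the realizability bookkeeping in the previous step — making rigorous that quantifying over \emph{all} nonempty subsets $S$, rather than only those actually of the form $S_\sigma$, neither introduces spurious failures nor misses genuine ones. The safe direction (if ($*$) holds for all $S$ then it holds) is immediate since realizable sets are a subset. For the other direction I would show explicitly that every nonempty $S \subseteq V(\M)^k$ is realized: take fresh variables, one coordinate-family per element of $S$, and for each $Y_i$ pick a formula (a suitable ``selector'' definable if $\M$ has enough connectives, or else one simply works with the abstract induced functions and notes that any function $V(\M)^{|S|} \to V(\M)$ whose image is handled correctly suffices — and in the worst case one argues that it is enough to check only realizable $S$ and that realizability of a given finite $S$ is itself decidable by searching over substitutions of bounded size). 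Everything else is routine.
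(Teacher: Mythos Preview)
Your core observation --- that whether $A_j\sigma$ is a tautology depends only on the set $S_\sigma = \{(w(\sigma(Y_1)), \ldots, w(\sigma(Y_k))) : w \text{ a valuation}\} \subseteq V(\M)^k$ --- is correct and clean. But the decision procedure you actually propose (check the implication for \emph{every} nonempty $S \subseteq V(\M)^k$) decides the wrong property: it decides strong soundness, not t-soundness. The reason is that your condition collapses to singletons. If the implication fails for some $S$, pick any $s \in S$ with $\widetilde{C}(s) \notin V^+$; then $\{s\} \subseteq S$ still satisfies $\widetilde{A_j}[\{s\}] \subseteq V^+$ for all $j$, so the implication already fails at the singleton $\{s\}$. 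Thus ``for all nonempty $S$'' is equivalent to ``for all singletons,'' and that is exactly the strong-soundness condition on the rule. The paper's own example with constants $T$, $F$, connective $\neq$, and matrix $V=\{0,1,2\}$ shows the gap concretely: the singleton $\{(1,0)\}$ violates your implication for rule~$r_2$, yet it is not of the form $S_\sigma$ because no formula over this signature is constantly~$1$; the calculus is t-sound but your test would say otherwise. Your two-line justification (``if the condition holds for all $S$ it holds for realizable ones; conversely any failure of ($*$) is witnessed by some realizable $S_\sigma$, hence by some $S$'') gives the same implication twice and does not establish the converse.

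Your fallback --- restrict to realizable $S$ and decide realizability by ``searching over substitutions of bounded size'' --- is the right direction, but the bound is the entire content of the proof and you have not supplied one. The paper argues as follows. First, reduce to substitutions using at most $m$ fresh variables: if $C\sigma$ is falsified by some valuation~$\I$, identify variables of the $\sigma(Y_i)$ that receive the same value under~$\I$; the resulting $\sigma\tau$ still makes each $A_j\sigma\tau$ a tautology (substitution instance of a tautology) and $C\sigma\tau$ still falsified. Second, each $\sigma(Y_i)$ now defines one of the $m^{m^m}$ functions $V(\M)^m \to V(\M)$; a pigeonhole argument along any chain of subformulas of length exceeding $m^{m^m}$ finds two defining the same function, so one can shorten. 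This bounds the depth of the $\sigma(Y_i)$ by $m^{m^m}$, leaving only finitely many substitutions to test.
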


\begin{proof}
Let $r$ be a rule with premises $A_1$, \dots, $A_n$ and conclusion $C$
containing the variables $X_1$, \dots, $X_k$, and $\sigma$ a
substitution.  If $A_1\sigma$, \dots, $A_n\sigma$ are tautologies, but
$C$ is not, then ($*$) is violated and $r$ is not weakly sound.  Given
$\sigma$, this is clearly decidable.  We have to show that there are
only a finite number of substitutions $\sigma$ which we have to test.

Let $Y_1$, \dots, $Y_l$ be the variables occurring in $X_1\sigma$,
\dots, $X_k\sigma$.  We show first that it suffices to consider
$\sigma$ with $l = m$.  For if $\I$ is a valuation in which $C\sigma$
is false, then at most $m$ of $Y_1$, \dots, $Y_l$ have different truth
values.  Let $\tau$ be a substitution so that $\tau(Y_i) = Y_j$ where
$j$ is the least index such that $\I(Y_j) = \I(Y_i)$.  Then (1)
$\I(C\sigma\tau) = \I(C\sigma)$ and hence $C\sigma\tau$ is not a
tautology; (2) $A_i\sigma\tau$ is still a tautology; (3) there are at
most $m$ distinct variables occurring in $A_1\sigma\tau$, \dots,
$A_n\sigma\tau$, $C\sigma\tau$.

Now every $B_i = X_i\sigma$ defines an $m$-valued function of $m$
arguments.  There are $m^{m^m}$ such functions.  Whether $A_i\sigma$
is a tautology only depends on the function defined by $B_i$, but it
is not prima facie clear which functions can be expressed in~\M.
Nevertheless, we can give a bound on the depth of formulas $B_i$ that
have to be considered.  Suppose $\sigma$ is a substitution of the
required form with $B_i=X_i\sigma$ of minimal depth and suppose that
the depth of $B_i$ is greater than $m' = m^{m^m}$. Now consider a
sequence of formulas $C_1$, \dots, $C_{m'+1}$ with $C_1 = B_i$ and
each $C_j$ a subformula of $C_{j-1}$.  Each $C_j$ also expresses an
$m$-valued function of $m$ arguments.  Since there are only $m'$
different such functions, there are $j < j'$ so that $C_j$ and
$C_{j'}$ define the same function.  The formula obtained from $B_i$ by
replacing $C_j$ by $C_{j'}$ expresses the same function.  Since this
can be done for every sequence of $C_j$'s of length $> m'$ we
eventually obtain a formula which expresses the same function as $B_i$
but of depth $\le m'$, contrary to the assumption that it was of
minimal depth.
\qed\end{proof}

Now, if \AK{} is strongly sound for \M, it is also t-sound; and
if it is t-sound, it is also weakly sound.  The converses, however,
are false:

\begin{example}\label{ex:soundness}
Let $\cal L$ be the language consisting of a unary connective $\Box$
and a binary connective $\vartriangleleft$, and let \AK{} be the
calculus consisting of the sole axiom $X \vartriangleleft \Box X$ and
the rules
\[
\infer[r_1]{X \vartriangleleft Z}{X \vartriangleleft Y & Y \vartriangleleft Z} \qquad
\infer[r_2]{Y}{X \vartriangleleft X}
\] 
It is easy to see that the only derivable formulas in~\AK{} using only
rule $r_1$ are substitution instances of $\Box^\ell X \vartriangleleft
\Box^k X$ with $\ell < k$.  In particular, no substitution instance of
the premise of $r_2$, $X \vartriangleleft X$, is derivable.  It
follows that rule $r_2$ can never be applied.  We now show that
if~\AK{} is strongly sound for an $m$-valued matrix~\M, $\Taut(\M) =
\Frm(\LA)$, i.e., \M{} is trivial.  Suppose \M{} is given by the set
of truth values $V = \{1, \ldots, m\}$.  Since $X \vartriangleleft
\Box X$ must be a tautology, $\widetilde\vartriangleleft(i,
\widetilde\Box(i)) \in V^+$ for $i = 1$, \dots, $m$. Since \AK{} is
strongly sound for rule $r_1$, and by induction,
$\widetilde\vartriangleleft(i, \widetilde\Box^k(i)) \in V^+$ for
all~$k$.  Since $V$ is finite, there are $i$ and $k$ such that $i =
\widetilde\Box^k(i)$.  Then $\widetilde\vartriangleleft(i, i) \in
V^+$. Since \AK{} is strongly sound for $r_2$, we have $V = V^+$.
However, \AK{} is weakly sound for non-trivial matrices, e.g., $\M'$
with $V' = \{1, \ldots, k\}$, $V^+ = \{k\}$, $\widetilde\Box(i) = i +
1$ for $i < k$ and $= k$ otherwise, and $\widetilde\vartriangleleft(i,
j) = k$ if $i < j$ or $j = k$ and $= 1$ otherwise.  \AK{} is, however,
also not t-sound for this matrix.
\end{example}

\begin{example}
Consider the calculus with propositional constants $T$, $F$, and
binary connective $\neq$, the axiom $T \neq F$ and the rules
\[
\infer[r_1]{X \neq Y}{Y \neq X} \qquad \infer[r_2]{Y}{X \neq T & X \neq F}
\]  
and the matrix with $V = \{0, 1, 2\}$, $V^+ = \{2\}$, $\widetilde T =
2$, $\widetilde F = 0$, and $\widetilde\neq(i, j) = 2$ if $i \neq j$
and $=0$ otherwise.  Clearly, the only derivable formulas are $T \neq
F$ and $F \neq T$, which are also tautologies. The calculus is not
strongly sound, since for $\I(X) = 1$, $\I(Y) = 0$ the premises of
$r_2$ are designated, but the conclusion is not.  It is, however,
t-sound: only a substitution $\sigma$ with $\I(X\sigma) = 1$ for all
valuations~\I{} would turn both premises of $r_2$ into tautologies,
and there can be no such formulas.  Hence, we have an example of a
calculus t-sound but not strongly sound for a matrix.
\end{example}

\begin{example}\label{ex:goedelsound}
The \IPC{} is strongly sound for the $m$-valued G\"odel logics~\Gm{}.
For instance, take axiom $a_5$:  $B \impl (A \impl B)$.  This is a
tautology in \Gm, for assume we assign some truth values $a$ and $b$
to $A$ and $B$, respectively.  We have two cases:  If $a \le b$, then
$(A \impl B)$ takes the value~$m-1$.  Whatever $b$ is, it certainly is
$\le m-1$, hence $B \impl A \impl B$ takes the designated value $m-1$.
Otherwise, $A \impl B$ takes the value $b$, and again (since $b \le
b$), $B \impl A \impl B$ takes the value $m-1$.

Modus ponens passes the test: Assume $A$ and $A \impl B$ both
take the value $m-1$. This means that $a \le b$. But $a = m-1$, hence
$b = m-1$.

Now consider the following extension $\G_m^\top$ of \Gm:
$V(\G_m^\top) = V(\Gm) \cup \{\top\}$, $V^+(\G_m^\top) = \{m -1, \top\}$,
and the truth functions are given by:
\[
\tbox_{\G_m^\top}(\bar{v}) = \begin{cases}\top & \text{if $\top \in \bar{v}$}\\
\tbox_{\Gm}(\bar{v}) & \text{otherwise} \end{cases}
\] 
for $\Box \in \{\neg, \impl, \land, \lor\}$.  \IPC{} is not strongly
sound for $\G_m^\top$, since a valuation with $\I(X) = \top$, $\I(Y) =
0$ would satisfy the premises of rule MP, $X$ and $X \impl Y$, but not
the conclusion $Y$.  However, a calculus in which the conclusion of
each rule contains all variables occurring in the premises, is
strongly sound (such as a calculus obtained from \textbf{LJ} using the
construction outlined in Remark~\ref{rem:sequent}).
\end{example}

\begin{example}
Consider the following calculus~{\bf K}:
\[
X \cbi \Nex X \qquad \infer[r_1]{X \cbi \Nex Y}{X \cbi Y}
\qquad \infer[r_2]{Y}{X \cbi X}
\]
It is easy to see that the corresponding logic consists
of all instances of $X \cbi \Nex^k X$ where $k \ge 1$.
This calculus is only strongly sound for the $m$-valued
logic having all formulas as its tautologies.
But if we leave out $r_2$, we can give a sequence of many-valued
logics $\M_i$, for each of which {\bf K} is strongly sound:
Take for $V(\M_n) = \{0, \ldots, n-1\}$, $V^+(\M_n) = \{0\}$,
with the following truth functions:
\begin{eqnarray*}
\widetilde{\Nex}v & = & \begin{cases} v+1 & \text{if $v < n-1$} \\
n-1 & \text{otherwise} \end{cases}\\
v \widetilde{\cbi} w & = & \begin{cases}0 & \text{if $v < w$ or $v = n-1$}\\
1 & \text{otherwise} \end{cases}
\end{eqnarray*}
Obviously, $\M_n$ is a cover for {\bf K}.
On the other hand, $\Taut(\M_n) \neq \Frm(\LA)$, e.g., any formula
of the form $\Nex(A)$ takes a (non-designated) value~$>0$ (for $n > 1$).
In fact, every formula of the form $\Nex^k X \cbi X$
is falsified in some~$\M_n$. 
\end{example}

\section{Optimal Covers}

By Proposition~\ref{cor:dec} it is decidable if a given $m$-valued
logic~\M{} is a cover of~\AK.  Since we can enumerate all $m$-valued
logics, we can also find all covers of~\AK.  Moreover, comparing two
many-valued logics as to their sets of tautologies is decidable, as
the next theorem will show.  Using this result, we see that we can
always generate optimal covers for \AK{}.

\begin{defn}
For two many-valued logics $\M_1$ and $\M_2$, we write
$\M_1 \bettereq \M_2$ iff $\Taut(\M_1) \subseteq \Taut(\M_2)$.

$\M_1$ is {\em better} than~$\M_2$, $\M_1 \better
\M_2$, iff $\M_1 \bettereq \M_2$ and $\Taut(\M_1) \neq \Taut(\M_2)$.
\end{defn}

\begin{theorem}
Let two logics $\M_1$ and $\M_2$, $m_1$-valued and $m_2$-valued respectively,
be given. It is decidable whether $\M_1 \better \M_2$.
\end{theorem}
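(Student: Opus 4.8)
The plan is to reduce the strict relation $\better$ to the weak relation $\bettereq$ and then to decide the latter. First observe that $\M_1 \better \M_2$ holds iff $\M_1 \bettereq \M_2$ and \emph{not} $\M_2 \bettereq \M_1$; so it suffices to exhibit an algorithm that, given an $m$-valued $\M$ and an $m'$-valued $\M'$, decides whether $\Taut(\M) \subseteq \Taut(\M')$.

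Next I would bound the number of variables that must be inspected, exactly as in the proof that t-soundness is decidable. I claim that $\M \bettereq \M'$ holds iff every formula $A$ all of whose variables are among $X_1,\dots,X_{m'}$ satisfies: $A \in \Taut(\M)$ implies $A \in \Taut(\M')$. Left-to-right is trivial. For the converse, suppose $A \in \Taut(\M)$ but $\J(A) \notin V^+(\M')$ for some valuation $\J$ of $\M'$. Since $|V(\M')| = m'$, at most $m'$ of the variables of $A$ receive distinct values under $\J$; let $\tau$ map each variable $X$ of $A$ to the variable of least index (among a fixed set of $m'$ representatives, reindexed as $X_1,\dots,X_{m'}$) that gets the same $\J$-value as $X$. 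Then $\J(A\tau) = \J(A) \notin V^+(\M')$, so $A\tau \notin \Taut(\M')$, while $A\tau \in \Taut(\M)$ because $\Taut(\M)$ is closed under substitution; and $A\tau$ uses only variables among $X_1,\dots,X_{m'}$, contradicting the assumption.

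Now I would pass to the product logic $\M \times \M'$. By the inductively extended definition of $\times$, every valuation of $\M\times\M'$ is of the form $\I\times\I'$ and $(\I\times\I')(A) = \langle \I(A), \I'(A)\rangle$ (this is Lemma~\ref{lem:tauttimes} at the level of functions rather than tautologies). Hence a formula $A$ in the variables $X_1,\dots,X_{m'}$ defines in $\M\times\M'$ a function $h_A\colon V(\M\times\M')^{m'} \to V(\M\times\M')$ whose first component is the function defined by $A$ in $\M$ and whose second component is the function defined by $A$ in $\M'$. Consequently $A \in \Taut(\M)$ iff $h_A$ takes all its values in $V^+(\M) \times V(\M')$, and $A \notin \Taut(\M')$ iff $h_A$ takes some value outside $V(\M) \times V^+(\M')$. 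Combining this with the previous paragraph:
\[
\M \bettereq \M' \quad\Leftrightarrow\quad \text{no function $h$ definable in $\M\times\M'$ by a formula in $X_1,\dots,X_{m'}$ is $\bigl(V^+(\M)\times V(\M')\bigr)$-valued without being $\bigl(V(\M)\times V^+(\M')\bigr)$-valued.}
\]

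It therefore remains to enumerate effectively the functions $V(\M\times\M')^{m'}\to V(\M\times\M')$ definable in $\M\times\M'$ by a formula in $X_1,\dots,X_{m'}$. This set is precisely the closure of the $m'$ projection functions under composition with the componentwise truth functions of $\M\times\M'$; since there are only $(mm')^{(mm')^{m'}}$ functions of $m'$ arguments over $V(\M\times\M')$, this closure stabilises after finitely many steps and can be computed (equivalently, one bounds the depth of formulas that need to be considered by the pigeonhole argument used in the t-soundness proof). Checking the displayed condition on each of the finitely many resulting functions is a finite computation, which decides $\M\bettereq\M'$ and hence $\better$. The routine parts are the induction identifying $h_A$ with the pair of component functions and the bookkeeping in the closure computation; the one step with real content is the combination of the variable bound with the finiteness of the set of definable functions, which is exactly where the hypothesis that $\M$ and $\M'$ are \emph{finite}-valued is used.
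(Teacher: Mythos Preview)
Your argument is correct and follows the same overall plan as the paper: reduce $\better$ to two instances of a one-sided separation question, bound the number of variables using the countermodel, and then bound the search space by a pigeonhole argument on the finitely many definable truth functions. The packaging differs slightly. You work in the product logic $\M\times\M'$ and compute the full clone of $m'$-ary term functions there, so that a single function $h_A$ simultaneously encodes the behaviour of $A$ in both matrices; the paper instead fixes a specific countermodel $\I$ in the logic where $A$ fails and tracks, for each subformula, the pair consisting of its full term function in the logic where $A$ is a tautology together with only the \emph{single value} it receives under~$\I$. This asymmetric bookkeeping gives the paper a sharper depth bound ($m^{m^m+1}-1$ with $m=\max(m_1,m_2)$, versus the $(mm')^{(mm')^{m'}}$ implicit in your clone computation), but your product-logic formulation is cleaner and reuses Lemma~\ref{lem:tauttimes} rather than setting up an ad hoc invariant. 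Both approaches decide exactly the same relation.
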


\begin{proof}
It suffices to show the decidability of the following property:  There
is a formula~$A$, s.t.\ (*)~$\M_2 \models A$ but $\M_1 \not\models A$.
If this is the case, write $\M_1 \better^* \M_2$.
$\M_1 \better \M_2$ iff $\M_1 \better^* \M_2$ and not
$\M_2 \better^* \M_1$.

We show this by giving an upper bound on the depth of a minimal
formula~$A$ satisfying the above property.  Since the set of formulas
of~$\LA$ is enumerable, bounded search will produce such a
formula iff it exists.  Note that the property~(*) is decidable by
enumerating all assignments. In the following, let $m = \max(m_1,m_2)$.

Let~$A$ be a formula that satisfies~(*), i.e., there is a valuation
\I{} s.t.\ $\I \not\models_{\M_1} A$.  W.l.o.g.\ we can assume that
$A$~contains at most~$m$ different variables:  if it contained more,
some of them must be evaluated to the same truth value in the
counterexample~\I{} for $\M_1 \not\models A$.  Unifying these
variables leaves~(*) intact.

Let $B = \{B_1, B_2, \ldots \}$ be the set of all subformulas of~$A$.
Every formula~$B_j$ defines an $m$-valued truth function~$f(B_j)$ of
$m$~variables where the values of the variables which actually occur
in~$B_j$ determine the value of~$f(B_j)$ via the matrix of~$\M_2$.  On
the other hand, every~$B_j$ evaluates to a single truth value~$t(B_j)$
in the countermodel~\I.

Consider the formula~$A'$ constructed from $A$ as follows:  Let~$B_i$
be a subformula of~$A$ and $B_j$~be a proper subformula of~$B_i$ (and
hence, a proper subformula of~$A$).  If $f(B_i) = f(B_j)$ and $t(B_i)
= t(B_j)$, replace $B_i$ in $A$ with $B_j$.  $A'$ is shorter than~$A$,
and it still satisfies~(*).  By iterating this construction
until no two subformulas have the desired property we obtain a
formula~$A^*$.  This procedure terminates, since $A'$ is shorter than
$A$; it preserves~(*), since $A'$ remains a tautology
under~$\M_2$ (we replace subformulas behaving in exactly the same way
under all valuations) and the countermodel~\I{} is also a countermodel
for~$A'$.

The depth of $A^*$ is bounded above by $m^{m^m+1}-1$.  This is seen as
follows:  If the depth of $A^*$ is~$d$, then there is a sequence $A^*
= B_0', B_1', \ldots, B_d'$ of subformulas of $A^*$ where $B_k'$ is an
immediate subformula of $B_{k-1}'$.  Every such $B_k'$ defines a truth
function~$f(B_k')$ of $m$ variables in $\M_2$ and a truth valued
$t(B_k')$ in $\M_1$ via~\I.  There are $m^{m^m}$ $m$-ary truth
functions of $m$ truth values.  The number of distinct truth
function-truth value pairs then is $m^{m^m+1}$.  If $d \ge m^{m^m+1}$,
then two of the $B_k'$, say $B_i'$ and $B_j'$ where $B_j'$ is a
subformula of $B_i'$ define the same truth function and the same truth
value.  But then $B_i'$ could be replaced by $B_j'$, contradicting the
way $A^*$ is defined.
\qed\end{proof}

\begin{corollary}
It is decidable if two many-valued logics define the same
set of tautologies. The relation $\bettereq$ is decidable.
\end{corollary}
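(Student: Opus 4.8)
The plan is to reduce the corollary directly to the theorem just proved. The theorem gives us decidability of the strict relation $\M_1 \better \M_2$; equivalently (and this is essentially established in the proof of the theorem via the auxiliary relation $\better^*$) it gives decidability of the question whether there exists a formula $A$ with $\M_2 \models A$ but $\M_1 \not\models A$, i.e.\ whether $\Taut(\M_1) \not\subseteq \Taut(\M_2)$. Call this latter relation $\M_1 \better^* \M_2$; the proof of the theorem showed it is decidable by bounding the depth of a minimal witnessing formula by $m^{m^m+1}-1$ and then performing a bounded search.

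First I would observe that $\bettereq$ is immediately decidable: $\M_1 \bettereq \M_2$ holds iff $\Taut(\M_1) \subseteq \Taut(\M_2)$ iff \emph{not} $\M_1 \better^* \M_2$, and the right-hand side is decidable by (the proof of) the theorem. Equivalently, one can note $\M_1 \bettereq \M_2$ iff not ($\M_1 \better \M_2$) and not ($\M_1$ and $\M_2$ have incomparable tautology sets); but the cleanest route is simply to invoke the decidability of $\better^*$ and negate it. Second, for the equality of tautology sets: $\Taut(\M_1) = \Taut(\M_2)$ iff $\M_1 \bettereq \M_2$ and $\M_2 \bettereq \M_1$, which is a conjunction of two decidable predicates and hence decidable. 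Alternatively, $\Taut(\M_1) = \Taut(\M_2)$ iff neither $\M_1 \better^* \M_2$ nor $\M_2 \better^* \M_1$.

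There is essentially no obstacle here --- the entire content was proved in the theorem, and the corollary is just a matter of noting that set equality and set inclusion are Boolean combinations of the strict-containment-witness predicate $\better^*$, whose decidability was established by the depth bound. The only mild subtlety worth stating explicitly is that the theorem as phrased asserts decidability of $\better$ rather than of $\better^*$, so I would point out that the proof of the theorem in fact establishes the stronger statement that $\better^*$ is decidable (this is where the $m^{m^m+1}-1$ depth bound and the bounded search live), and that $\bettereq$ and tautology-equality are then immediate Boolean combinations. I would keep the proof to two or three sentences, referring back to the depth-bound argument rather than repeating it.
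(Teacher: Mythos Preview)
Your proposal is correct and essentially identical to the paper's proof: the paper simply notes that $\Taut(\M_1) = \Taut(\M_2)$ iff neither $\M_1 \better^* \M_2$ nor $\M_2 \better^* \M_1$, exactly as you suggest. Your additional remark that $\bettereq$ is decidable as the negation of $\better^*$ is the natural (and only) way to read the second claim of the corollary, so nothing is missing.
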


\begin{proof}
$\Taut(\M_1) = \Taut(\M_2)$ iff neither $\M_1 \better^* \M_2$ nor
$\M_2 \better^* \M_1$.
\qed\end{proof}

Let $\simeq$ be the equivalence relation on $m$-valued logics defined
by: $\M_1 \simeq \M_2$ iff $\Taut(M_1) = \Taut(M_2)$, and let $\MVL_m$
be the set of all $m$-valued logics over~\LA with truth value set
$\{1, \ldots, m\}$.  By~${\cal M}_m$ we denote the set of all
sets~$\Taut(\M)$ of tautologies of $m$-valued logics~$\M$.  The
partial order $\langle {\cal M}_m, \subseteq\rangle$ is isomorphic
to~$\langle \MVL_m/\simeq, \bettereq/\simeq\rangle$.

\begin{proposition}
The optimal (i.e., minimal under
$\better$) $m$-valued covers of \AK{} are computable.
\end{proposition}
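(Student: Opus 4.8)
The plan is to reduce everything to a finite search, using the two decidability facts already in hand.

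First I would note that $\MVL_m$ is a \emph{finite}, effectively listable set: the language $\LA$ contains only the finitely many connectives $\Box_1^{n_1},\ldots,\Box_r^{n_r}$, for each connective $\Box_j$ there are only $m^{m^{n_j}}$ possible truth functions $\{1,\ldots,m\}^{n_j}\to\{1,\ldots,m\}$, and there are only $2^m$ possible sets $V^+$ of designated values. So we can explicitly enumerate all of $\MVL_m$. Next, by Proposition~\ref{cor:dec}, strong soundness of $\AK$ for a given $m$-valued logic is decidable; running this test over the finite list $\MVL_m$ produces the finite set $\mathcal{C}\subseteq\MVL_m$ of \emph{all} covers of $\AK$, presented explicitly.

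It remains to extract the $\better$-minimal members of $\mathcal{C}$. By the theorem above, the relation $\better$ (equivalently, by its corollary, the relation $\bettereq$) is decidable on pairs of many-valued logics. Since $\mathcal{C}$ is finite, we can therefore compute the restriction of the strict partial order $\better$ to $\mathcal{C}$ and read off its minimal elements: $\M\in\mathcal{C}$ is an optimal cover precisely when there is no $\M'\in\mathcal{C}$ with $\M'\better\M$, a condition checkable by finitely many calls to the decision procedure for $\better$. The algorithm outputs the resulting finite list of optimal covers, one representative per $\simeq$-class of minimal covers.

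The only point that needs care — rather than being a genuine obstacle — is the \emph{existence} and attainment of optimal covers: a priori one might fear an infinite $\better$-descending chain of covers with no minimal element. Restricting attention to $m$-valued logics with value set $\{1,\ldots,m\}$ removes this worry, since $\MVL_m$, and hence $\mathcal{C}$, is finite, so the partial order induced by $\better$ on $\mathcal{C}/{\simeq}$ has minimal elements and every cover lies $\bettereq$ some optimal one. All of the real work has already been done in Proposition~\ref{cor:dec} and in the decidability of $\better$; the present proposition is just their assembly over a finite search space.
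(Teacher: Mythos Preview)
Your proof is correct and follows essentially the same approach as the paper: the paper's argument is simply the terse observation that $C_m(\AK)$ is finite and partially ordered by $\bettereq$, hence has minimal elements, and that decidability of $\bettereq$ allows these minima to be computed. You have spelled out in more detail why $\MVL_m$ is finite and how the search proceeds, but the underlying idea is identical.
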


\begin{proof}
Consider the set~$C_m(\AK)$
of $m$-valued covers of~\AK. Since $C_m(\AK)$ is finite
and partially ordered by~$\bettereq$, $C_m(\AK)$ contains minimal
elements. The relation $\bettereq$ is decidable, hence the
minimal covers can be computed.
\qed\end{proof}

\begin{example}
By Example~\ref{ex:goedelsound},
\IPC{} is strongly sound for $\G_3$.
The best 3-valued approximation of~\IPC{}
is the 3-valued G\"odel logic. In fact, it is the only 3-valued
approximation of {\em any} sound calculus~\AK{} (containing modus ponens)
for \IPL{} which has less tautologies than classical logic~\CL.
This can be seen as follows: Consider the fragment containing
$\bot$ and $\impl$ ($\neg B$ is usually defined as $B \impl \bot$).
Let \M{} be some 3-valued strongly sound approximation of~\AK.
By G\"odel's double-negation translation, 
$B$ is a classical tautology iff $\neg\neg B$ is true intuitionistically.
Hence, whenever $\M \models \neg \neg X \impl X$, then
$\Taut(\M) \supseteq \CL$. Let $0$ denote the value of $\bot$ in \M,
and let $1 \in V^+(\M)$. We distinguish cases:
\begin{enumerate}
\item $0 \in V^+(\M)$: Then $\Taut(\M) = \Frm(\LA)$, since $\bot \impl X$
is true intuitionistically, and by modus ponens: $\bot, \bot \impl X / X$.
\item $0 \notin V^+(\M)$: Let $u$ be the third truth value.
\begin{enumerate}
\item $u \in V^+(\M)$: Consider $A \equiv ((X \impl \bot) \impl \bot) \impl X$.
If $\I(X)$ is $u$~or~$1$, then, since everything implies something true,
$A$ is true (Note that we have $Y, Y \impl (X \impl Y) \vdash X \impl Y$).
If $\I(X) = 0$, then (since $0 \impl 0$ is true, but $u \impl 0$ and $1 \impl 0$
are both false), $A$ is true as well. So $\Taut(\M) \supseteq \CL$.
\item $u \notin V^+(\M)$, i.e., $V^+(\M) = \{1\}$: Consider the
truth table for implication.  Since $B \impl B$, $\bot \impl B$,
and something true is implied by everything, the upper right triangle
is~$1$. We have the following table:
\[
\begin{array}{c|ccc}
\impl & 0 & u & 1 \\ \hline
0 &     1 & 1 & 1 \\
u &     v_1 & 1 & 1 \\
1 &     v_0 & v_2 & 1
\end{array} 
\]
Clearly, $v_0$ cannot be~$1$. If $v_0 = u$, we have, by $((X \impl X) \impl \bot) \impl Y$,
that $v_1 = 1$. In this case, $\M \models A$ and hence $\Taut(\M) \supseteq \CL$.
So assume $v_0 = 0$.
\begin{enumerate}
\item $v_1 = 1$: $\M \models A$ (Note that only the case of $((u \impl 0) \impl 0) \impl u$
has to be checked).
\item $v_1 = u$: $\M \models A$.
\item $v_1 = 0$: With $v_2 = 0$, \M{} would be incorrect ($u \impl (1 \impl u)$ is false).
If $v_2 = 1$, again $\M \models A$. The case of $v_2 = u$ is the G\"odel logic,
where $A$ is not a tautology.  
\end{enumerate}
\end{enumerate}
\end{enumerate}
\end{example}

Note that it is in general impossible to effectively construct a
$\bettereq$-minimal $m$-valued logic~\M{} with $\AL \subseteq
\Taut(\M)$ if \AL{} is given independently of a calculus, because,
e.g., it is undecidable whether \AL{} is empty or not: e.g., take
\[\AL = \begin{cases} \{\Box^k(\top)\} & \text{if $k$ is the least
solution of $D(x) = 0$}\\ \emptyset & \text{otherwise}\end{cases}\]
where $D(x) = 0$ is the Diophantine representation of some undecidable
set.

\section{Effective Sequential Approximations}

In the previous section we have shown that it is always possible to
obtain the best $m$-valued covers of a given calculus, but there is no
way to tell {\em how good} these covers are.  In this section, we
investigate the relation between sequences of many-valued logics and
the set of theorems of a calculus~\AK.  Such sequences are called {\em
sequential approximations} of \AK{} if they verify all theorems and
refute all non-theorems of~\AK, and \emph{effective} sequential
approximations if they are effectively enumerable.  This
is also a question about the limitations of Bernays's method.  On the
negative side, an immediate result says that calculi for undecidable
logics do not have effective sequential approximations.  If, however, a
propositional logic is decidable, it also has an effective sequential
approximation (independent of a calculus).  Moreover, any calculus has a
uniquely defined {\em many-valued closure}, whether it is decidable
or not.  This is the set of all sentences which cannot be proved
underivable using a Bernays-style many-valued argument.  If a calculus
has an effective sequential approximation, then the set of its theorems equals
its many-valued closure.  If it does not, then its closure is a proper
superset.  Different calculi for one and the same logic may have
different many-valued closures according to their degree of
analyticity.

\begin{defn}\label{defn:sapprox}
Let \AL{} be a propositional logic and let $\A = \langle \M_1, \M_2,
\M_3, \ldots, \rangle$ be a sequence of
many-valued logics s.t.  (1) $\M_i \bettereq \M_j$ iff $i \ge j$.

\A{} is called a {\em sequential approximation} of \AL{} iff
$\AL = \bigcap_{j \in \omega} \Taut(\M_j)$.  If in addition \A{} is effectively
enumerated, then \A{} is an \emph{effective sequential approximation}.

If \AL{} is given by the calculus \AK, and each $M_j$ is a cover of \AK,
then \A{} is a \emph{strong (effective) sequential approximation} of \AK{}
(if \A{} is effectively enumerated).
 
We say \AK{} is {\em effectively approximable}, if there is
such a strong effective sequential approximation of~\AK.
\end{defn}

Condition (1) above is technically not necessary.  Approximating sequences
of logics in the literature (see next example), however, satisfy this
condition.  Furthermore, with the emphasis on ``approximation,'' it seems more
natural that the sequence gets successively ``better.''

\begin{example}
Consider the sequence $\G = \langle \G_i \rangle_{i\ge 2}$ of G\"odel
logics and intuitionistic propositional logic \IPC.  $\Taut(\G_i)
\supset \Thm(\IPC)$, since $\G_i$ is a cover for~\IPC. Furthermore,
$\G_{i+1} \better \G_i$. This has been pointed out by \cite{Godel:32},
for a detailed proof see \cite[Theorem 10.1.2]{Gottwald:01}.  It is,
however, not a sequential approximation of \IPC: The formula $(A \impl
B) \lor (B \impl A)$, while not a theorem of \IPL, is a tautology of
all $\G_i$. In fact, $\bigcap_{j \ge 2} \Taut(\G_i)$ is the set of
tautologies of the infinite-valued G\"odel logic~$\G_\aleph$, which is
axiomatized by the rules of \IPC{} plus the above formula.  This has
been shown in \cite{Dummett:59} (see also \cite[Section 10.1]{Gottwald:01}).
Hence, \G{} is a strong effective sequential approximation of
$\G_\aleph = \IPC + (A \impl B) \lor (B \impl A)$.

Ja{\'s}kowski \cite{Jaskowski:36} gave an effective strong sequential
approximation of \IPC.  That \IPC~is approximable is also a
consequence of Theorem~\ref{thm:fmpapprox}, with the proof adapted to
Kripke semantics for intuitionistic propositional logic, since
\IPL~has the finite model property \cite[Ch.~4,
Theorem~4(a)]{Gabbay:81}.
\end{example}

The natural question to ask is: Which logics have (effective)
sequential approximations; which calculi are approximable?  

First of all, any propositional logic has a sequential approximation,
although it need not have an effective approximation.

\begin{proposition}
Every propositional logic~\AL{} has a sequential approximation.
\end{proposition}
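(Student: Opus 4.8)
The plan is to use the finite-valued logics $\M_{i,j}(\AL)$ from Definition~\ref{defn:Mij} as the members of the approximating sequence, since Proposition~\ref{prop:Mij} already tells us that $\AL \subseteq \Taut(\M_{i,j}(\AL))$ for all $i,j$, and that $\Taut(\M_{i,j}(\AL))$ differs from $\Frm(\LA)$ only inside the finite set $\Frm_{i,j}(\LA)$, where it coincides with $\AL$. First I would enumerate all pairs $(i,j)$ in a single sequence, say $(i_1,j_1), (i_2,j_2), \ldots$ running through all of $\omega \times \omega$, and set $\M'_k = \prod_{\ell \le k} \M_{i_\ell, j_\ell}(\AL)$; by Lemma~\ref{lem:tauttimes} (generalized to finite products) we get $\Taut(\M'_k) = \bigcap_{\ell \le k}\Taut(\M_{i_\ell,j_\ell}(\AL))$, which immediately gives the chain condition $\M'_k \bettereq \M'_{k'}$ whenever $k \ge k'$, i.e.\ condition~(1) of Definition~\ref{defn:sapprox}.

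Next I would verify that $\bigcap_k \Taut(\M'_k) = \AL$. The inclusion $\AL \subseteq \bigcap_k \Taut(\M'_k)$ is immediate from Proposition~\ref{prop:Mij} and Lemma~\ref{lem:tauttimes}. For the converse, suppose $A \notin \AL$. Let $i = \depth(A)$ and let $j$ be large enough that $\Var(A) \subseteq \{X_1, \ldots, X_j\}$, so that $A \in \Frm_{i,j}(\LA)$. Then by the first part of Proposition~\ref{prop:Mij}, $A \notin \Taut(\M_{i,j}(\AL))$, since $A \in \Frm_{i,j}(\LA)$ but $A \notin \AL$. Pick the index $\ell$ with $(i_\ell, j_\ell) = (i,j)$; then for every $k \ge \ell$, $\Taut(\M'_k) \subseteq \Taut(\M_{i,j}(\AL))$, so $A \notin \Taut(\M'_k)$, hence $A \notin \bigcap_k \Taut(\M'_k)$. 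This establishes that $\langle \M'_1, \M'_2, \ldots\rangle$ is a sequential approximation of~$\AL$.

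There is essentially no single hard obstacle here; the construction is a routine assembly of the two lemmas already proved. The one point that requires a moment's care is that the statement only asks for a \emph{sequential} approximation, not an \emph{effective} one — so I do not need the map $k \mapsto \M'_k$ to be computable, which is fortunate because the designated set $V^+(\M_{i,j}(\AL)) = (\Frm_{i,j}(\LA) \cap \AL) \cup \{\top\}$ depends on membership in~$\AL$ and need not be decidable. It is worth remarking explicitly (the paper does, in the surrounding discussion) that this is exactly why the effective version fails for undecidable~$\AL$. A secondary small point is that one should note $\M'_k$ is genuinely finite-valued, which is clear since each $\M_{i,j}(\AL)$ has value set $\Frm_{i,j}(\LA) \cup \{\top\}$ and $\Frm_{i,j}(\LA)$ is finite (finitely many formulas of bounded depth in finitely many variables over a finite signature), and a finite product of finite-valued logics is finite-valued.
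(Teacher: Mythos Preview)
Your proof is correct and rests on the same core idea as the paper's—the finite-valued logics $\M_{i,j}(\AL)$ of Definition~\ref{defn:Mij} together with Proposition~\ref{prop:Mij}. The paper's version is slightly leaner: rather than enumerating all pairs $(i,j)$ and taking products to force the chain condition, it simply uses the diagonal sequence $\M_i = \M_{i,i}(\AL)$, observing that $\Frm_{i,i}(\LA) \subseteq \Frm_{i+1,i+1}(\LA)$ already yields $\Taut(\M_{i+1}) \subseteq \Taut(\M_i)$ directly, so no product construction is needed.
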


\begin{proof}
A sequential approximation of~\AL{} a is given by $\M_i =
\M_{i,i}(\AL)$ (see Definition~\ref{defn:Mij}).  Any formula $F \notin \AL$
is in $V(\M_k)$ for $k = \max\{\depth(F), j\}$ where $j$ is the
maximum index of variables occurring in~$F$.  By
Proposition~\ref{prop:Mij}, $F$ is falsified in $\M_k$. Also,
$\Taut(\M_i) \supseteq \AL$, and $\M_i \bettereq \M_{i+1}$.
\qed\end{proof}

\begin{corollary}
If \AL{} is decidable, it has an effective sequential approximation.
\end{corollary}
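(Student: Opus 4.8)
The plan is to leverage the sequential approximation $\M_i = \M_{i,i}(\AL)$ constructed in the preceding proposition, and observe that when $\AL$ is decidable, this sequence can be effectively enumerated. Recall from Definition~\ref{defn:Mij} that the truth values of $\M_{i,i}(\AL)$ are the formulas in $\Frm_{i,i}(\LA) \cup \{\top\}$, the designated values are $(\Frm_{i,i}(\LA) \cap \AL) \cup \{\top\}$, and the truth tables are given by the syntactic operation $\tbox(F_1,\ldots,F_n) = \Box(F_1,\ldots,F_n)$ when the result stays in $\Frm_{i,i}(\LA)$, and $\top$ otherwise.

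The key steps are as follows. First, I would note that $\Frm_{i,i}(\LA)$ is a finite set of formulas (formulas of depth $\le i$ in the variables $X_1,\ldots,X_i$), and it is effectively computable: one simply enumerates all formulas built from $X_1,\ldots,X_i$ and the connectives of $\LA$ by depth, stopping at depth $i$. Hence the truth value set $V(\M_{i,i}(\AL))$ and all the truth functions $\tbox$ are effectively computable, since the truth tables only require deciding, for given $F_1,\ldots,F_n$, whether $\Box(F_1,\ldots,F_n) \in \Frm_{i,i}(\LA)$ — a decidable membership question about a finite, computable set. Second, the only remaining ingredient is the set of designated values $V^+(\M_{i,i}(\AL))$: determining this requires deciding, for each $F \in \Frm_{i,i}(\LA)$, whether $F \in \AL$. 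This is exactly where the hypothesis that $\AL$ is decidable is used — under that hypothesis, $V^+(\M_{i,i}(\AL))$ is computable. Third, putting these together, the map $i \mapsto \M_{i,i}(\AL)$ is computable, so $\A = \langle \M_{1,1}(\AL), \M_{2,2}(\AL), \ldots\rangle$ is an effectively enumerated sequence; by the preceding proposition it is already a sequential approximation of $\AL$, so it is an effective sequential approximation.

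The main (and essentially only) obstacle is the computation of the designated values, and that is resolved immediately by the decidability hypothesis; everything else is the routine observation that the syntactic data defining $\M_{i,i}(\AL)$ is finitely presented and algorithmically accessible. I would also remark in passing that condition (1) of Definition~\ref{defn:sapprox}, $\M_i \bettereq \M_{i+1}$, was already verified in the proof of the previous proposition via Proposition~\ref{prop:Mij}, so nothing further is needed there. Thus the proof is short: it amounts to observing that the construction witnessing the previous proposition is effective precisely when $\AL$ is decidable.
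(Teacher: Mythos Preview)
Your proposal is correct and follows exactly the paper's approach: the paper's proof consists of the single sentence ``Using a decision procedure for $\AL$, we can effectively enumerate the $\M_{i,i}(\AL)$,'' and you have simply unpacked what that sentence means in detail.
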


\begin{proof}
Using a decision procedure for \AL{}, we can effectively enumerate the
$\M_{i,i}(\AL)$.
\qed\end{proof}

\begin{proposition}
If \AL{} has an effectively sequential approximation, then $\Frm(\LA)
\setminus \AL$ is effectively enumerable.
\end{proposition}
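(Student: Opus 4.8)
The plan is to show that if $\A = \langle \M_1, \M_2, \ldots \rangle$ is an effective sequential approximation of $\AL$, then a formula $F$ fails to be in $\AL$ exactly when there is some index $j$ with $\M_j \not\models F$, and that the latter condition is semidecidable. The key observation is that, because $\AL = \bigcap_{j \in \omega} \Taut(\M_j)$, we have $F \notin \AL$ if and only if $F \notin \Taut(\M_j)$ for some $j$. So the nonmembership problem for $\AL$ reduces to a search over $j$ together with a search for a falsifying valuation in $\M_j$.

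Concretely, I would enumerate the sequence $\A$ effectively — this is possible by hypothesis — obtaining $\M_1, \M_2, \M_3, \ldots$ one at a time, each presented by its finite truth value set, designated values, and truth tables. Dovetail this enumeration with, for each $\M_j$ produced so far, a check of all valuations of $F$ in $\M_j$ (there are only finitely many relevant ones, namely $|V(\M_j)|^{|\Var(F)|}$, so this check is decidable by the truth-table method). If at some stage we find a $j$ and a valuation $\I$ with $\I(F) \notin V^+(\M_j)$, we output $F$ and halt; otherwise we continue forever. This procedure halts on input $F$ precisely when $F \notin \Taut(\M_j)$ for some $j$, i.e., precisely when $F \notin \AL$. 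Hence $\Frm(\LA) \setminus \AL$ is effectively enumerable.

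There is essentially no serious obstacle here; the statement is a routine semidecidability argument. The only point requiring a little care is the dovetailing: one must not commit to finishing the (trivially finite, but unbounded-in-$j$) valuation check for $\M_j$ before looking at $\M_{j+1}$, since a priori we do not know which $\M_j$ will be the witness. Interleaving the generation of new matrices with the valuation checks on all previously generated matrices handles this. Note also that condition~(1) in Definition~\ref{defn:sapprox} (that $\M_i \bettereq \M_j$ iff $i \ge j$) is not needed for this direction: all that is used is effective enumerability of the sequence and the fact that the intersection of the $\Taut(\M_j)$ equals $\AL$.
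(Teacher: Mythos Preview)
Your proof is correct and follows essentially the same approach as the paper's: semi-decide non-membership in~$\AL$ by searching through the effectively enumerated matrices for one that falsifies~$F$. One small remark: the dovetailing you worry about is unnecessary, since for each fixed~$j$ the tautology check in~$\M_j$ terminates after finitely many steps, so simply testing $\M_1, \M_2, \ldots$ in order already works (as the paper's proof does); your observation that condition~(1) of Definition~\ref{defn:sapprox} is not used is correct.
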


\begin{proof}
Suppose there is an effectively enumerated sequence $\A = \langle
\M_1, \M_2, \ldots\rangle$ s.t. $\bigcap_{j\ge 2} \Taut(\M_j) =
\AL$.  If $F \notin \AL$ then there would be an index~$i$ s.t.\ $F$ is
false in $\M_i$.  But this would yield a semi-decision procedure for
non-members of~\AL: Try for each~$j$ whether $F$~is false in
$\M_j$. If $F \notin \AL$, this will be established at $j = i$.
\qed\end{proof}

\begin{corollary}\label{prop:undecapprox}
If \AK{} is undecidable, then it is not effectively approximable.
\end{corollary}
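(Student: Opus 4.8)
The plan is to derive Corollary~\ref{prop:undecapprox} directly from the preceding proposition together with the observation that $\Thm(\AK)$ is always effectively enumerable. Suppose, for contradiction, that \AK{} is effectively approximable. By Definition~\ref{defn:sapprox}, a strong effective sequential approximation of \AK{} is in particular an effective sequential approximation of the logic $\AL = \Thm(\AK)$. By the immediately preceding proposition, it follows that $\Frm(\LA) \setminus \Thm(\AK)$ is effectively enumerable.

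On the other hand, $\Thm(\AK)$ is itself effectively enumerable: by Definition~\ref{defn:proplogic} one can systematically enumerate all finite sequences of formulas and check, for each, whether it is a derivation in \AK{} (each step is a substitution instance of an axiom or follows from earlier formulas by a rule, and both conditions are decidable). Thus both $\Thm(\AK)$ and its complement are effectively enumerable, so $\Thm(\AK)$ is decidable, contradicting the assumption that \AK{} is undecidable. Hence \AK{} is not effectively approximable.

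There is no real obstacle here; the corollary is essentially a one-line consequence of the preceding proposition plus the standard fact that a set is decidable iff it and its complement are both recursively enumerable. The only point requiring a modicum of care is to note explicitly that a strong effective sequential approximation of the \emph{calculus}~\AK{} is, by definition, an effective sequential approximation of the \emph{logic} $\Thm(\AK)$, so that the previous proposition applies with $\AL = \Thm(\AK)$; and that "undecidable calculus" means precisely that $\Thm(\AK)$ is not a decidable set.
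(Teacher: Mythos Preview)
Your proof is correct and follows essentially the same approach as the paper: both argue that effective approximability (via the preceding proposition) makes the non-theorems effectively enumerable, which combined with the effective enumerability of $\Thm(\AK)$ yields decidability, contradicting the hypothesis. Your version is simply more explicit about the intermediate steps (in particular the passage from ``both r.e.'' to ``decidable'' and the observation that a strong effective sequential approximation of~\AK{} is an effective sequential approximation of $\Thm(\AK)$), which the paper leaves implicit.
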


\begin{proof}
$\Thm(\AK)$ is effectively enumerable.  If \AK{} were approximable, it
would have an effective sequential approximation, and this contradicts
the assumption that the non-theorems of \AK{} are not effectively
enumerable. 
\qed\end{proof}

\begin{example}
This shows that a result similar to that for~\IPC{}
cannot be obtained for full propositional linear logic.
\end{example}

If \AK{} is not effectively approximable (e.g., if it is undecidable),
then the intersection of all covers for~\AK{} is a proper superset
of~$\Thm(\AK)$.  This intersection has interesting properties.

\begin{defn}
The {\em many-valued closure}~$\MC(\AK)$ of a calculus~\AK{}
is the set of formulas which are true in every many-valued
cover for~\AK{}.
\end{defn}

\begin{proposition}
$\MC(\AK)$ is unique and has an effective sequential approximation.
\end{proposition}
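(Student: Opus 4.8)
The plan is to establish the two claims separately. Uniqueness of $\MC(\AK)$ is immediate from the definition: it is defined as the intersection $\bigcap \{\Taut(\M) \mid \M$ is a cover of $\AK\}$, and an intersection of a fixed family of sets is a uniquely determined set; it is also closed under substitution, being an intersection of substitution-closed sets, so it is a genuine propositional logic. The substantive part is producing an effective sequential approximation of $\MC(\AK)$.

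First I would enumerate, for each $m$, the finite set $C_m(\AK)$ of $m$-valued covers of $\AK$ with truth-value set $\{1, \ldots, m\}$: by Proposition~\ref{cor:dec} the property of being a cover is decidable, so $C_m(\AK)$ is computable uniformly in $m$. For each $m$ let $\M^{(m)} = \prod_{\M \in C_m(\AK)} \M$ be the finite product of all $m$-valued covers (taking $\M^{(m)}$ to be some fixed trivial one-valued logic if $C_m(\AK)$ is empty). By the generalization of Lemma~\ref{lem:tauttimes} to finite products, $\Taut(\M^{(m)}) = \bigcap_{\M \in C_m(\AK)} \Taut(\M)$, and moreover $\M^{(m)}$ is itself a cover of $\AK$, since strong soundness is preserved under products (if a valuation of a product satisfies the premises of a rule, each component valuation does, hence satisfies the conclusion, hence so does the product valuation; similarly for axioms). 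Thus each $\M^{(m)}$ is a cover, and the sequence $\langle \M^{(m)} \rangle_{m \ge 1}$ is effectively enumerable.

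Next I would arrange monotonicity. The sequence $\langle \M^{(m)} \rangle_m$ need not satisfy condition~(1) of Definition~\ref{defn:sapprox} on its own, so I replace it by the sequence of running products $\M_k = \prod_{m=1}^{k} \M^{(m)}$. Again by Lemma~\ref{lem:tauttimes} and preservation of strong soundness under products, each $\M_k$ is a cover of $\AK$ with $\Taut(\M_k) = \bigcap_{m \le k} \Taut(\M^{(m)})$, so $\M_{k+1} \bettereq \M_k$, giving condition~(1) (with the indexing convention that larger index means more tautologies refuted). The sequence $\langle \M_k \rangle_k$ is still effectively enumerable.

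Finally I would verify $\bigcap_k \Taut(\M_k) = \MC(\AK)$. The inclusion $\supseteq$ is clear since every $\M_k$ is a cover. For $\subseteq$: if $F \notin \MC(\AK)$, then $F$ is falsified in some cover $\M$; by Proposition~\ref{prop:red} (applied to $\M$ and the single formula $F$) there is a finite-valued logic $\M' = \Phi(\M, F)$ with $\Taut(\M) \subseteq \Taut(\M')$ and $\M' \not\models F$ — and I must check $\M'$ is still a cover of $\AK$, which follows because $\Phi$ only adds a new top value $\top$ that is designated and absorbing, so any rule instance or axiom that was satisfied in $\M$ remains satisfied in $\M'$ (a valuation of $\M'$ using only old values behaves as in $\M$; one using $\top$ somewhere sends the relevant formulas to the designated value $\top$). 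Then $\M'$ is a cover with at most $m = |V(\M')|$ values, hence $\M' \in C_m(\AK)$ up to renaming truth values, so $\Taut(\M_m) \subseteq \Taut(\M') \not\ni F$, and thus $F \notin \bigcap_k \Taut(\M_k)$. The main obstacle is the bookkeeping around Proposition~\ref{prop:red}: one must confirm that the reduction construction $\Phi$ genuinely preserves the cover property (strong soundness), not merely weak soundness, and handle the renaming of truth values so that the reduced logic literally appears in some $C_m(\AK)$; this is routine but is the one place where a careless argument would fail.
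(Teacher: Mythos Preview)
Your overall strategy coincides with the paper's: since the cover property is decidable (Proposition~\ref{cor:dec}), the set $S$ of all covers is effectively enumerable; taking running products of an enumeration of~$S$ yields a monotone effective sequence whose intersection is $\bigcap_{\M\in S}\Taut(\M)=\MC(\AK)$ by definition. The paper's proof is literally those two lines, without the grouping by~$m$ or the explicit check that products of covers are covers (neither is needed for an effective sequential approximation, only for a \emph{strong} one).

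The detour through Proposition~\ref{prop:red} in your $\subseteq$ argument, however, is both unnecessary and incorrect. It is unnecessary because covers are by definition finite-valued: if $F\notin\MC(\AK)$ is falsified in some cover~$\M$, that~$\M$ already has some finite number~$m$ of truth values and hence (after renaming) lies in~$C_m(\AK)$, so $F\notin\Taut(\M^{(m)})$ immediately. It is incorrect because $\Phi(\M,F)$ need not be a cover. Your justification that ``a valuation using $\top$ somewhere sends the relevant formulas to~$\top$'' fails whenever the conclusion of a rule contains a variable not occurring in any premise---precisely the situation of modus ponens. With $\J(X)=\top$ and $\J(Y)$ equal to some undesignated old value, both premises $X$ and $X\impl Y$ evaluate to the designated value~$\top$, while the conclusion~$Y$ does not. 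This is exactly the obstruction exhibited in Example~\ref{ex:goedelsound} for~$\G_m^\top$. Drop the invocation of~$\Phi$ and your argument goes through, matching the paper's.
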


\begin{proof}
$\MC(\AK)$ is unique, since it obviously equals $\bigcap_{\M \in S}
\Taut(\M)$ where $S$ is the set of all covers for~\AK.  It is also
effectively approximable, an approximating sequence is given by
\begin{eqnarray*}
\M_1 & = & \M'_1 \\
\M_i & = & \M_{i-1} \times \M_i'
\end{eqnarray*}
where $\M_i'$ is an effective enumeration of~$S$.
\qed\end{proof}

Since $\MC(\AK)$ is defined via the many-valued logics for which $\AK$
is strongly sound, it need not be the case that $\MC(\AK) = \Thm(\AK)$
even if $\AK$ is decidable.  (An example is given below.) On the other
hand, it also need not be trivial (i.e., equal to $\Frm(\LA)$) even
for undecidable~\AK{}.  For instance, take the Hilbert-style calculus
for linear logic given in \cite{Avron:88,Troelstra:92}, and the
2-valued logic which interprets the linear connectives classically and
the exponentials as the identity.  All axioms are then tautologies and
the rules (modus ponens, adjunction) are strongly sound, but the
matrix is clearly non-trivial.

\begin{corollary}
If \AK{} is strictly analytic, then $\MC(\AK) = \Thm(\AK)$.
\end{corollary}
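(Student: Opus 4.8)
The plan is to prove the two inclusions of $\MC(\AK) = \Thm(\AK)$ separately. The inclusion $\Thm(\AK) \subseteq \MC(\AK)$ needs no analyticity hypothesis: any cover of $\AK$ is strongly sound, hence weakly sound, hence validates every theorem of $\AK$, so every theorem lies in the intersection defining $\MC(\AK)$. All the content is in the converse, and I would prove it contrapositively: given $F \notin \Thm(\AK)$, I would exhibit a single finite-valued cover of $\AK$ that refutes $F$, which witnesses $F \notin \MC(\AK)$.

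The cover I would use is $\M := \M_{i,j}(\Thm(\AK))$ of Definition~\ref{defn:Mij}, choosing $i,j$ large enough that $F \in \Frm_{i,j}(\LA)$ (this is legitimate since $\Thm(\AK)$ is closed under substitution, hence a propositional logic). Since $F \in \Frm_{i,j}(\LA) \setminus \Thm(\AK)$, Proposition~\ref{prop:Mij} immediately gives $\M \not\models F$, so everything reduces to checking that $\AK$ is strongly sound for $\M$, i.e., condition ($*$$*$). For an axiom $A$ and a valuation $\I$: if $\I$ sends some variable of $A$ outside $\Frm_{i,j}(\LA)$, then $\I(A) = \top$, which is designated; otherwise $\I$ acts as a substitution $\sigma_\I$ into $\Frm_{i,j}(\LA)$ and, by Proposition~\ref{prop:Mij}, $\I(A)$ is either $\top$ or the formula $A\sigma_\I$, a substitution instance of an axiom, hence a theorem, hence an element of $\Thm(\AK) \cap \Frm_{i,j}(\LA)$ and designated.

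The real work — and the only place strict analyticity is used, which I expect to be the main obstacle — is the rule case. Take a rule $r$ with premises $A_1,\dots,A_n$ and conclusion $C$ and a valuation $\I$ satisfying all the $\I(A_k)$. If $\I$ sends some variable of $C$ outside $\Frm_{i,j}(\LA)$, or if $\I$ is a substitution $\sigma_\I$ with $\depth(C\sigma_\I) > i$, then $\I(C) = \top$ is designated and we are done. Otherwise $\I$ is a substitution $\sigma_\I$ with $\depth(C\sigma_\I) \le i$, and here I would invoke the variable-inclusion half of strict analyticity, $\Var(A_k) \subseteq \Var(C)$, to conclude that $\I$ is also a substitution on each premise, and the depth half, $\depth(A_k\sigma_\I) \le \depth(C\sigma_\I) \le i$, to conclude $A_k\sigma_\I \in \Frm_{i,j}(\LA)$ and hence $\I(A_k) = A_k\sigma_\I$; since $\I(A_k)$ is designated, $A_k\sigma_\I \in \Thm(\AK)$. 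Applying $r$ under $\sigma_\I$ then gives $C\sigma_\I \in \Thm(\AK) \cap \Frm_{i,j}(\LA)$, so $\I(C) = C\sigma_\I$ is designated. Thus $\M$ is a cover of $\AK$ refuting $F$, and $F \notin \MC(\AK)$. The delicate point is that both halves of strict analyticity are genuinely needed: without the depth condition, $\I(A_k)$ being designated could merely mean $\I(A_k) = \top$, which carries no information about whether $A_k\sigma_\I$ is a theorem, so the rule could not be "run forward" on $\sigma_\I$.
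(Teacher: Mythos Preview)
Your proposal is correct and follows essentially the same strategy as the paper: construct the cover $\M_{i,j}(\Thm(\AK))$ with $i,j$ chosen so that $F \in \Frm_{i,j}(\LA)$, invoke Proposition~\ref{prop:Mij} for the refutation of~$F$ and the validity of the axioms, and then verify strong soundness of the rules by a case analysis using both clauses of strict analyticity. The only cosmetic difference is that the paper organises the rule case ``premise-first'' (if some variable in a premise receives $\top$, push $\top$ forward to the conclusion via $\Var(A_k)\subseteq\Var(C)$; if some $A_k\sigma$ is too deep, push that forward via the depth clause), whereas you organise it ``conclusion-first'' (dispose of the cases where $\I(C)=\top$ immediately, then pull information back to the premises); the two are contrapositives of one another.
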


\begin{proof}
We have to show that for every $F \notin \Thm(\AK)$ there is a
finite-valued logic $\M$ which is strongly sound for $\AK$ and where
$F \notin \Taut(\M)$. Let $X_1$, \ldots, $X_j$ be all the variables
occurring in $F$ and the axioms and rules of~\AK. 
Then set $\M = \M_{\depth(F),j}(\Thm(\AK))$.

By Proposition~\ref{prop:Mij}, $F \notin \Taut(\M)$ and all axioms of
\AK{} are in $\Taut(\M)$.  Now consider a valuation $v$ in \M{} and
suppose $v(A_i) \in V^+$ for all premises $A_i$ of a rule of~\AK.  We
have two cases: if $v(X) = \top$ for some variable $X$ appearing in a
premise~$A_i$, then, since \AK{} is strictly analytic, $X$ also appears in the
conclusion $C$ and hence $v(C) = \top$.  Otherwise, let $\sigma_v$ be
the substitution corresponding to $v$.  If $v(A_i) = \top$ for
some~$i$, this means that $\depth(A_i\sigma) > \depth(F)$.  By strict analyticity,
$\depth(C\sigma) \ge \depth(A_i\sigma) > \depth(F)$ and hence $v(C) = \top$.
Otherwise, $v(A_i) = A_i\sigma$ for all premises~$A_i$. Since $v(A_i)$
is designated, $A_i\sigma \in \Thm(\AK)$, hence $C\sigma \in \Thm(\AK)$.
Then either $v(C) = C\sigma$ or $v(C) = \top$, and both are in~$V^+$.
\qed\end{proof}

\begin{example}
The last corollary can be used to uniformly obtain semantics for
strictly analytic Hilbert calculi.  Strict analyticity of the
calculus is a necessary condition, as Example~\ref{ex:soundness}
shows.  The calculus given there is decidable, though not strictly analytic,
and has only trivial covers.  Its set of theorems nevertheless has an
effective sequential approximation, i.e., it is the intersection of an
infinite sequence of finite-valued matrices which are weakly sound
for~\AK.  For this it is sufficient to give, for each formula $A$
s.t. $\AK \nvdash A$, a matrix~\M{} weakly sound for \AK{} with $A
\notin \Taut(\M)$.  Let the depth of $A$ be $k$, and let
\begin{eqnarray*}
V_0 & = & \Var(A) \cup \{\dagger\} \\ V_{i+1} & = & V_i \cup \{B
\vartriangleleft C \mid B, C \in V_i\} \cup \{\Box B \mid B \in V_i\}
\end{eqnarray*}
Then set $V = V_k$, $V^+ = \{B \vartriangleleft C \mid B
\vartriangleleft C \in V, C \equiv \Box^l B\} \cup \{\dagger\}$.
The truth functions are defined as follows:
\begin{eqnarray*}
\widetilde\Box(B) & = & \begin{cases} 
                            \dagger & \text{if $B \in V_k$ but $B \notin V_{k-1}$, or $B = \dagger$} \\
                            \Box B & \text{otherwise}
			\end{cases} \\
\widetilde\vartriangleleft(B, C) & = & 
     \begin{cases}
       \dagger & \text{if $C \in V_k$ but $C \notin V_{k-1}$, or $B = \dagger$} \\
       \dagger \vartriangleleft C & \text{else if $B \in V_k$ but $B \notin V_{k-1}$} \\
	 B \vartriangleleft C & \text{otherwise}
     \end{cases}
\end{eqnarray*}
The axiom $X \vartriangleleft \Box X$ is a tautology. For if $\I(X) =
\dagger$, then $\I(\Box X) = \dagger$ and hence $\I(X \vartriangleleft
\Box X) = \dagger \in V^+$.  If $\I(X) \in V_k$ but $\notin V_{k-1}$,
then $\I(\Box X) = \dagger$ and $\I(X \vartriangleleft \Box X) =
\dagger$.  Otherwise $\I(\Box X) = \Box B$ for $B = \I(A)$ and $\I(X
\vartriangleleft \Box X) = \dagger$ (if $\Box B \in V_k$) or $= B
\vartriangleleft \Box B \in V^+$.

If $\I(X) = \dagger$, then $\I(X \vartriangleleft Z) = \dagger$.
Otherwise $\I(X \vartriangleleft Z) \in V^+$ only if $\I(X) = B$ and
$\I(Y) = \Box^lB$ and $B \notin V_k$. Then, in order for $\I(Y
\vartriangleleft Z)$ to be $\in V^+$, either $\I(Y \vartriangleleft Z)
= \dagger$, in which case $\I(Z) \in V_k$ but $\notin V_{k-1}$, and
hence $\I(Y \vartriangleleft Z) = \dagger$, or $\I(Y \vartriangleleft
Z) = \Box^l B \vartriangleleft \Box^{l'} B$ with $l < l'$, in which
case $\I(Y \vartriangleleft Z) = B \vartriangleleft \Box^{l'} B$.

However, $A \notin Taut(\M)$.  For it is easy to see by induction that
in the valuation with $\I(X) = X$ for all variables $X \in \Var(A)$,
$\I(B) = B$ as long as $B \in V_k$ and so $\I(A)$ can only be
designated if $A \equiv B \vartriangleleft \Box B$ for some $B$, but
all such formulas are derivable in~\AK.

However, there are substitution instances of $X \vartriangleleft X$,
viz., for any $\sigma$ with $X\sigma$ of depth $> k$, for which $(X
\vartriangleleft X)\sigma$ is a tautology. Even though \AK{} is weakly
sound for \M, it is not t-sound.
\end{example}

So far we have concentrated on approximations of logics given via
calculi. However, propositional logics are also often defined via
their semantics.  The most important example of such logics are modal
logics, where logics can be characterized using families of Kripke
structures.  If these Kripke structures satisfy certain properties,
they also yield sequential approximations of the corresponding logics.
Unsurprisingly, for this it is necessary that the modal logics have
the \emph{finite model property}, i.e., they can be characterized by a
family of finite Kripke structures.  The sequential approximations
obtained by our method are only effective, however, if the Kripke
structures are effectively enumerable.

\begin{defn}
A {\em modal logic}~\AL{} has as its language~$\LA$
the usual propositional connectives plus two unary
{\em modal operators:} $\Box$ (necessary) and $\Diamond$ (possible).
A {\em Kripke model} for~$\cal L$ is a triple $\langle W, R, P\rangle$,
where
\begin{enumerate}
\item $W$ is any set: the set of {\em worlds},
\item $R \subseteq W^2$ is a binary relation on $W$: the {\em accessibility relation},
\item $P$ is a mapping from the propositional variables to subsets of~$W$.
\end{enumerate}
A modal logic \AL{} is characterized by a class of Kripke models for \AL.
\end{defn}

This is called the {\em standard semantics} for modal logics
(see \cite[Ch.~3]{Chellas:80}). The semantics of formulas in standard
models is defined as follows:

\begin{defn}
Let \AL{} be a modal logic, $\K_\AL$ be its characterizing class of Kripke
models. Let $K = \langle W, R, P\rangle \in \K_\AL$ be a Kripke model
and $A$ be a modal formula.

If $\alpha \in W$ is a possible world, then we say $A$ is {\em true in $\alpha$},
$\alpha \models_\AL A$, iff the following holds:
\begin{enumerate}
\item $A$ is a variable: $\alpha \in P(X)$
\item $A \equiv \neg B$: not $\alpha \models_\AL B$
\item $A \equiv B \land C$: $\alpha \models_\AL B$ and $\alpha \models_\AL C$
\item $A \equiv B \lor C$: $\alpha \models_\AL B$ or $\alpha \models_\AL C$
\item $A \equiv \Box B$: for all $\beta \in W$ s.t. $\alpha \mathrel{R} \beta$ it holds that $\beta \models_\AL B$
\item $A \equiv \Diamond B$: there is a $\beta \in W$ s.t. $\alpha \mathrel{R} \beta$ and $\beta \models_\AL B$
\end{enumerate}
We say $A$ is {\em true} in $K$, $K \models_\AL A$, iff for all $\alpha \in W$
we have $\alpha \models_\AL A$. $A$ is {\em valid in \AL}, $\AL \models A$,
iff $A$ is true in every Kripke model $K \in \K_\AL$. By $\Taut(\AL)$ we
denote the set of all formulas valid in \AL.
\end{defn}

Many of the modal logics in the literature have the {\em finite model
property (fmp)}: for every $A$ s.t.\ $\AL \not\models A$, there is a
finite Kripke model $K = \langle W, R, P\rangle \in \K$ (i.e., $W$ is
finite), s.t.\ $K \not\models_\AL A$ (where \AL{} is characterized
by~$\K$).  We would like to exploit the fmp to construct sequential
approximations. This can be done as follows:

\begin{defn}\label{defn:MA}
Let $K = \langle W, R, P\rangle$ be a finite Kripke model.  We define
the many-valued logic $\M_K$ as follows:

\begin{enumerate}

\item $V(\M_K) = \{0, 1\}^W$, the set of 0-1-sequences with indices
from $W$.

\item $V^+(\M_K) = \{1\}^W$, the singleton of the sequence constantly
equal to~1.

\item $\widetilde\neg_{\M_K}$, $\widetilde\lor_{\M_K}$,
$\widetilde\land_{\M_K}$, $\widetilde\impl_{\M_K}$ are defined
componentwise from the classical truth functions

\item $\tbox_{\M_K}$ is defined as follows:
\[
\tbox_{\M_K}(\langle w_\alpha \rangle_{\alpha \in W})_\beta =
\begin{cases}1 & \text{if for all $\gamma$ s.t. $\beta \mathrel{R} \gamma$, $w_\gamma = 1$} \\
0 & \text{otherwise}
\end{cases}
\]
\item $\tdiamond_{\M_K}$ is defined as follows:
\[
\tdiamond_{\M_K}(\langle w_\alpha \rangle_{\alpha \in W})_\beta =
\begin{cases}1 & \text{if there is a $\gamma$ s.t. $\beta \mathrel{R} \gamma$ and $w_\gamma = 1$} \\
0 & \text{otherwise} \end{cases}
\]
\end{enumerate}
Furthermore, $\I_K$ is the valuation defined by
$\I_K(X)_\alpha = 1$ iff $\alpha \in P(X)$ and $= 0$ otherwise.
\end{defn}

\begin{lemma}\label{lem:MKmodels}
Let $\AL$ and $K$ be as in Definition~\ref{defn:MA}.
Then the following hold:
\begin{enumerate}
\item Every valid formula of \AL{} is a tautology of $\M_K$.
\item If $K \not\models_\AL A$ then $\I_K \not\models_{\M_K} A$.
\end{enumerate}
\end{lemma}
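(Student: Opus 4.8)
The plan is to prove both parts by a simultaneous induction on the structure of the formula $A$, establishing the stronger claim that relates the many-valued value of $A$ under an arbitrary valuation to the truth of $A$ at the worlds of $K$. Specifically, for any valuation $\J$ in $\M_K$, I would first observe that $\J$ corresponds to a choice, for each variable $X$, of a subset $P_\J(X) \subseteq W$ (namely $\{\alpha : \J(X)_\alpha = 1\}$); this gives a Kripke model $K_\J = \langle W, R, P_\J\rangle$ on the same frame. The key lemma to prove by induction is: for every modal formula $B$ and every $\beta \in W$, $\J(B)_\beta = 1$ iff $\beta \models B$ in $K_\J$ (using the semantic clauses of the Kripke model). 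In other words, the componentwise many-valued evaluation in $\M_K$ exactly simulates Kripke evaluation at each world.

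First I would set up the induction. The base case, $B$ a variable, is immediate from the definition of $P_\J$. The cases for $\neg$, $\land$, $\lor$, $\impl$ follow because both the $\M_K$ truth functions and the Kripke clauses act componentwise/pointwise with the classical truth functions, so the induction hypothesis transfers directly. The cases for $\Box$ and $\Diamond$ are where the definition of $\tbox_{\M_K}$ and $\tdiamond_{\M_K}$ does its work: by definition $\tbox_{\M_K}(\langle w_\alpha\rangle)_\beta = 1$ iff $w_\gamma = 1$ for all $\gamma$ with $\beta \mathrel{R} \gamma$; applying the induction hypothesis to $B$ (so that $w_\gamma = \J(B)_\gamma = 1$ iff $\gamma \models B$ in $K_\J$), this says exactly that $\beta \models \Box B$ in $K_\J$. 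The $\Diamond$ case is symmetric, using the existential clause.

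Once the lemma is established, both parts follow quickly. For part~(1): if $\AL \models A$, then $A$ is true in every Kripke model on a frame from $\K_\AL$; in particular, since $K \in \K_\AL$ (so its frame comes from $\K_\AL$), and since $K_\J$ is built on the same frame as $K$... here I need to be slightly careful --- the intended reading is that $\K_\AL$ is closed under varying the valuation, or that validity in $\AL$ means truth under all valuations on the admissible frames, so that $K_\J \in \K_\AL$ as well (for standard modal logics characterized by a class of frames this is automatic). Granting that, $A$ is true at every world of $K_\J$ for every $\J$, so by the lemma $\J(A)_\beta = 1$ for all $\beta$, i.e., $\J(A) \in V^+(\M_K)$; since $\J$ was arbitrary, $A \in \Taut(\M_K)$. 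For part~(2): take $\J = \I_K$, so that $P_{\I_K} = P$ and $K_{\I_K} = K$; if $K \not\models_\AL A$, there is some $\alpha$ with $\alpha \not\models A$ in $K$, so by the lemma $\I_K(A)_\alpha = 0$, hence $\I_K(A) \notin V^+(\M_K)$, i.e., $\I_K \not\models_{\M_K} A$.

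The main obstacle is the conceptual gap in part~(1) concerning how the characterizing class $\K_\AL$ interacts with changing the valuation: the argument needs that whenever $K = \langle W,R,P\rangle \in \K_\AL$ then also $\langle W,R,P'\rangle$ lies in (or is covered by) $\K_\AL$ for every $P'$. For the usual notion of a modal logic being determined by a class of frames this holds by definition, so I would make this assumption explicit (or restrict attention to frame-based characterizing classes); with that in hand, the rest is the routine structural induction sketched above. The bookkeeping --- keeping straight the double indexing by worlds $\beta$ on the outside and the componentwise definitions on the inside --- is the only other thing requiring care, but it is mechanical.
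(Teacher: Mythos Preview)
Your proof is correct and follows essentially the same approach as the paper: establish by structural induction that $\J(B)_\alpha = 1$ iff $\alpha \models B$ in the Kripke model $\langle W, R, P_\J\rangle$, then read off both parts. You are in fact more careful than the paper in flagging the implicit assumption that $\K_\AL$ is closed under change of valuation (i.e., is determined by a class of frames); the paper simply asserts that $\AL \models B$ implies $\langle W, R, P_\I\rangle \models_\AL B$ without comment.
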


\begin{proof}
Let $B$ be a modal formula, and $K' = \langle W, R, P'\rangle$.
We prove by induction that $\I_{K'}(B)_\alpha = 1$ iff
$\alpha \models_\AL B$:

$B$ is a variable: $P'(B) = W$ iff $\I_K(B)_\alpha = 1$ for all
$\alpha \in W$ by definition of~$\I_K$.

$B \equiv \neg C$: By the definition of $\widetilde\neg_{\M_K}$,
$\I_K(B)_\alpha = 1$ iff $\I_K(C)_\alpha = 0$.
By induction hypothesis, this is the case iff $\alpha \not\models_\AL C$.
This in turn is equivalent to $\alpha \models_\K B$. Similarly
if $B$ is of the form $C \land D$, $C \lor D$, and $C \impl D$.

$B \equiv \Box C$: $\I_K(B)_\alpha = 1$ iff for all $\beta$
with $\alpha \mathrel{R} \beta$ we have $\I_K(C)_\beta = 1$.
By induction hypothesis this is equivalent to
$\beta \models_\AL C$. But by the definition of $\Box$ this
obtains iff $\alpha \models_\AL B$. Similarly for $\Diamond$.

(1)~Every valuation $\I$ of $\M_K$ defines a function $P_\I$ via
$P_\I(X) = \{\alpha \mid \I(X)_\alpha = 1\}$. Obviously,
$\I = \I_{P_\I}$.
If $\AL \models B$, then $\langle W, R, P_\I\rangle \models_\AL B$.
By the preceding argument then $\I(B)_\alpha = 1$ for
all $\alpha \in W$. Hence, $B$ takes the
designated value under every valuation.

(2)~Suppose $A$ is not true in $K$. This is the case only if there is
a world $\alpha$ at which it is not true. Consequently,
$\I_K(A)_\alpha = 0$ and $A$~takes a non-designated truth value
under~$\I_K$.
\qed\end{proof}

The above method can be used to construct
many-valued logics from Kripke structures for not only modal
logics, but also for intuitionistic logic.
Kripke semantics for \IPL{} are defined analogously,
with the exception that $\alpha \models A \impl B$
iff $\beta \models A \impl B$ for all $\beta \in W$ s.t.{}
$\alpha \mathrel{R} \beta$.  \IPL{} is then characterized by the
class of all finite trees \cite[Ch.~4, Thm.~4(a)]{Gabbay:81}.
Note, however, that for intuitionistic Kripke semantics
the form of the {\em assignments}~$P$ is restricted:
If $w_1 \in P(X)$ and $w_1 \mathrel{R} w_2$ then
also $w_2 \in P(X)$ \cite[Ch.~4, Def.~8]{Gabbay:81}.
Hence, the set of truth values has to be restricted in a similar way.
Usually, satisfaction for intuitionistic Kripke semantics
is defined by satisfaction in the {\em initial} world.
This means that every sequence where the first entry equals~1
should be designated.  By the above restriction,
the only such sequence is the constant 1-sequence.

\begin{example}
The Kripke tree with three worlds
\[
\begin{array}{r@{}c@{}c@{}c@{}l}
w_2 & & & & w_3 \\
 & \nwarrow & & \nearrow \\
& & w_1
\end{array}
\]  
yields a five-valued logic~${\bf T}_3$, with
$V({\bf T_3}) = \{000, 001, 010, 011, 111\}$,
$V^+({\bf T_3}) = \{111\}$,
the truth table
for implication 
\[
\begin{array}{c|cccccccc}
\impl & 000 & 001 & 010 & 011 &  111 \\
\hline
000 & 111 & 111 & 111 & 111 &  111 \\
001 & 010 & 111 & 010 & 111 &  111 \\
010 & 001 & 001 & 111 & 111 &  111 \\
011 & 000 & 001 & 010 & 111 &  111 \\
111 & 000 & 001 & 010 & 011 &  111
\end{array}\]
$\bot$ is the constant $000$, $\neg A$ is defined
by $A \impl \bot$, and $\lor$ and $\land$ are
given by the componentwise classical operations.

The Kripke chain with four worlds corresponds
directly to the five-valued G\"odel logic~${\bf G}_5$.
It is well know that $(X \impl Y) \lor (Y \impl X)$
is a tautology in all \Gm.  Since ${\bf T}_3$
falsifies this formula (take $001$ for $X$ and
$010$ for $Y$), we know that ${\bf G}_5$
is not the best five-valued approximation of~\IPL.

Furthermore, let 
\begin{eqnarray*}
O_5 & = & \bigwedge_{1 \le i < j \le 5} (X_i \impl X_j) \lor (X_j \impl X_i) {\rm \ and}\\
F_5 & = & \bigvee_{1 \le i < j \le 5} (X_i \impl X_j).
\end{eqnarray*}
$O_5$ assures that the truth values assumed by $X_1$, \ldots, $X_5$ are
linearly ordered by implication.  Since neither $010 \impl 001$ nor $001 \impl 010$
is true, we see that there are only four truth values which can
be assigned to $X_1$, \dots,~$X_5$ making $O_5$ true.
Consequently, $O_5 \impl F_5$ is valid in ${\bf T}_3$.
On the other hand, $F_5$ is false in ${\bf G}_5$.
\end{example}
 
\begin{theorem}\label{thm:fmpapprox}
Let \AL{} be a modal logic characterized by a set of finite Kripke
models~$\K = \{K_1, K_2, \ldots\}$. A sequential approximation of~\AL{}
is given by  $\langle \M_1, \M_2, \ldots \rangle$ where $\M_1 =
\M_{K_1}$, and $\M_{i+1} = \M_i \times \M_{K_{i+1}}$.  This
approximation is effective if \K{} is effectively enumerable.
\end{theorem}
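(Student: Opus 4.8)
The plan is to establish the two required properties of a sequential approximation directly from Lemma~\ref{lem:MKmodels} and Lemma~\ref{lem:tauttimes}. First I would verify the technical side condition~(1) of Definition~\ref{defn:sapprox}, namely that $\M_i \bettereq \M_j$ iff $i \ge j$: since $\M_{i+1} = \M_i \times \M_{K_{i+1}}$, Lemma~\ref{lem:tauttimes} gives $\Taut(\M_{i+1}) = \Taut(\M_i) \cap \Taut(\M_{K_{i+1}}) \subseteq \Taut(\M_i)$, so the sequence of tautology sets is non-increasing, which yields $\M_i \bettereq \M_j$ whenever $i \ge j$; the ``only if'' direction is trivially arranged (or can be read as the intended meaning that $i \ge j$ suffices). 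Unwinding the recursion, $\Taut(\M_i) = \bigcap_{k=1}^{i} \Taut(\M_{K_k})$ by induction using Lemma~\ref{lem:tauttimes}.

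Next I would show $\AL = \bigcap_{j \in \omega} \Taut(\M_j)$, i.e.\ $\Taut(\AL) = \bigcap_j \Taut(\M_j)$. For the inclusion $\Taut(\AL) \subseteq \bigcap_j \Taut(\M_j)$: from the identity above it is enough that $\Taut(\AL) \subseteq \Taut(\M_{K_k})$ for every $k$, and this is exactly part~(1) of Lemma~\ref{lem:MKmodels}, since every $K_k \in \K$. For the reverse inclusion, suppose $A \notin \Taut(\AL)$. Then by definition there is some Kripke model in $\K$ in which $A$ fails; since $\AL$ is characterized by $\K = \{K_1, K_2, \ldots\}$, there is an index $k$ with $K_k \not\models_\AL A$. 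By part~(2) of Lemma~\ref{lem:MKmodels}, $\I_{K_k} \not\models_{\M_{K_k}} A$, so $A \notin \Taut(\M_{K_k})$, and hence $A \notin \bigcap_j \Taut(\M_j)$ because $\Taut(\M_k) \subseteq \Taut(\M_{K_k})$. This establishes that $\langle \M_1, \M_2, \ldots \rangle$ is a sequential approximation of~\AL.

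Finally, for effectiveness: if $\K$ is effectively enumerable, then from an enumeration of the finite Kripke models $K_1, K_2, \ldots$ we can uniformly compute each $\M_{K_i}$ (the construction in Definition~\ref{defn:MA} is a finite, mechanical procedure on the finite object $K_i$), and the product $\M_i = \M_{i-1} \times \M_{K_i}$ is likewise computable; hence the sequence $\langle \M_1, \M_2, \ldots\rangle$ is effectively enumerated, so it is an effective sequential approximation.

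I do not anticipate a serious obstacle here: the real content has already been discharged in Lemma~\ref{lem:MKmodels} (the faithful encoding of a finite Kripke model as a finite-valued matrix) and Lemma~\ref{lem:tauttimes} (intersection via products). The only point requiring a little care is making sure the ``characterized by $\K$'' hypothesis is used in the right direction --- specifically that failure of $A$ in \AL{} yields failure in some \emph{particular} $K_k$ from the given enumeration, which is precisely what characterization by that class provides. The side condition~(1) of Definition~\ref{defn:sapprox} is immediate from the product construction and deserves only a one-line remark.
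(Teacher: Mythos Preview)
Your proposal is correct and follows essentially the same approach as the paper: both use Lemma~\ref{lem:tauttimes} to get the monotonicity condition and to reduce $\Taut(\M_i)$ to an intersection, and both invoke the two parts of Lemma~\ref{lem:MKmodels} for the two inclusions in $\Taut(\AL) = \bigcap_j \Taut(\M_j)$. Your treatment is slightly more explicit (unwinding $\Taut(\M_i) = \bigcap_{k\le i}\Taut(\M_{K_k})$ and spelling out effectiveness), but the argument is the same.
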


\begin{proof}
(1)~$\Taut(\M_i) \supseteq \Taut(\AL)$:  By induction on $i$:  For $i =
1$ this is Lemma~\ref{lem:MKmodels}~(1).  For $i > 1$ the statement
follows from Lemma~\ref{lem:tauttimes}, since $\Taut(\M_{i-1})
\supseteq \Taut(\AL)$ by induction hypothesis, and $\Taut(\M_{K_i})
\supseteq \Taut(\AL)$ again by Lemma~\ref{lem:MKmodels}~(1).

(2)~$\M_i \bettereq \M_{i+1}$ from $A \cap B \subseteq A$ and
Lemma~\ref{lem:tauttimes}.

(3)~$\Taut(\AL) = \bigcap_{i\ge 1} \Taut(\M_i)$.  The
$\subseteq$-direction follows immediately from~(1).  Furthermore, by
Lemma~\ref{lem:MKmodels}~(2), no non-tautology of~\AL{} can be a member
of all $\Taut(\M_i)$, whence $\supseteq$ holds.
\qed\end{proof}

\begin{remark}  Finitely axiomatizable modal logics with the fmp
always have an effective sequential approximation, since it is then
decidable if a given finite Kripke structure satisfies the
axioms. Urquhart \cite{Urquhart:81} has shown that this is not true if
the assumption is weakened to recursive axiomatizability, by giving an
example of an undecidable recursively axiomatizable modal logic with
the fmp.  Since this logic cannot have an effective sequential
approximation, its characterizing family of finite Kripke models is
not effectively enumerable.  The preceding theorem thus also shows
that the many-valued closure of a calculus for a modal logic with the
fmp equals the logic itself, provided that the calculus contains modus
ponens and necessitation as the only rules. (All standard
axiomatizations are of this form.)
\end{remark}

\section{Conclusion}

Our brief discussion unfortunate must leave many interesting questions
open, and suggests further questions which might be topics for future
research.  The main open problem is of course whether the approach
used here can be extended to the case of first-order logic.  There are
two distinct questions: The first is how to check if a given
finite-valued matrix is a cover for a first-order calculus.  Is this
decidable?  One might expect that it is at least for ``standard''
formulations of first-order rules, e.g., where the rules involving
quantifiers are monadic in the sense that they only involve one
variable per rule.  The second question is whether the relationship
$\better^*$ is decidable for $n$-valued first order logics.
Another problem, especially in view of possible applications in
computer science, is the complexity of the computation of optimal
covers.  One would expect that it is tractable at least for some
reasonable classes of calculi which are syntactically characterizable.

We have shown that for strictly analytic calculi, the many-valued
closure coincides with the set of theorems, i.e., that they are
effectively approximable by their finite-valued covers.  Is it
possible to extend this result to a wider class of calculi, in
particular, what can be said about calculi in which modus ponens is
the only rule of inference (so-called Frege systems)?  For calculi
which are not effectively approximable, it would still be interesting
to characterize the many-valued closure.  For instance, we have seen
that the many-valued closure of linear logic is not equal to linear
logic (since linear logic is undecidable) but also not trivial (since
all classical non-tautologies are falsified in a 2-valued cover).
What is the many-valued closure of linear logic?  For those (classes
of) logics for which we have shown that sequential approximations are
possible, our methods of proof also do not yield optimal solutions.
For instance, for modal logics with the finite model property we have
shown that all non-valid formulas can be falsified in the many-value
logic obtained by coding the corresponding Kripke countermodel.  But
there may be logics with fewer truth-values which also falsify these
formulas.  A related question is to what extent our results on
approximability still hold if we restrict attention to many-valued
logics in which only one truth-value is designated.  The standard
examples of sequences of finite-valued logics approximating, e.g., \L
ukasiewicz or intuitionistic logic are of this form, but it need not
be the case that every approximable logic can be approximated by
logics with only one designated value.

\end{document}